\newcommand{\bydef}{:=}
\newcommand{\vphi}{\varphi}
\newcommand{\eps}{\varepsilon}
\newcommand{\Sl}{\mathfrak{sl}}
\newcommand{\Psl}{\mathfrak{psl}}
\newcommand{\So}{\mathfrak{so}}
\newcommand{\Sp}{\mathfrak{sp}}
\newcommand{\NA}{N_A}
\newcommand{\AI}{\Gamma_A^\mathrm{(I)}}
\newcommand{\NAI}{N_A^\mathrm{(I)}}
\newcommand{\AII}{\Gamma_A^\mathrm{(II)}}
\newcommand{\NAII}{N_A^\mathrm{(II)}}
\newcommand{\C}{\Gamma_C}
\newcommand{\NC}{N_C}
\newcommand{\D}{\Gamma_D}
\newcommand{\ND}{N_D}
\newcommand{\M}{\Gamma_M}
\newcommand{\NM}{N_M}
\newcommand{\NLie}{N_X}
\newcommand{\NMA}{\hat N_M}
\newcommand{\NLieA}{\hat N_X}
\newcommand{\NAA}{\hat N_A}
\newcommand{\NCA}{\hat N_C}
\newcommand{\NDA}{\hat N_D}
\newcommand{\logsquaredterm}{\frac{2}{9\ln 2}(\ln n)^2}
\newcommand{\SP}{\mathrm{Sp}}
\newcommand{\ASP}{\mathrm{ASp}}
\DeclareMathOperator{\supp}{\mathrm{Supp}\,}
\DeclareMathOperator{\chr}{\mathrm{char}\,}
\newcommand{\FF}{\mathbb{F}}
\newcommand{\ZZ}{\mathbb{Z}}
\newcommand{\CC}{\mathbb{C}}
\newcommand{\cA}{\mathcal{A}}
\newcommand{\cB}{\mathcal{B}}
\newcommand{\parts}{P}
\newcommand{\fixed}{\mathrm{Fix}}
\newcommand{\partsmuq}[2]{P_{#1}(#2)}
\newcommand{\fits}[2]{f(#1,#2)}
\newtheorem{theorem}{Theorem}
\newtheorem{lemma}[theorem]{Lemma}
\theoremstyle{definition}
\newtheorem{df}{Definition}
\theoremstyle{remark}
\newtheorem{remark}{Remark}
\begin{document}

\author[M. Kochetov]{Mikhail Kochetov$^{\star}$}
\thanks{$^{\star}$Partially supported by the Natural Sciences and Engineering
Research Council (NSERC) of Canada, Discovery Grant \# 341792-07.}
\address{Department of Mathematics and Statistics,
Memorial University of Newfoundland, St. John's, NL, A1C5S7, Canada}
\email{mikhail@mun.ca}

\author[N. Parsons]{Nicholas Parsons$^{\dagger}$}
\thanks{$^{\dagger}$Supported by the NSERC Undergraduate Student Research
Award in Summer 2012 (under the supervision of M. Kochetov) with a financial contribution from Professor D. Summers' NSERC Discovery Grant.}
\address{Department of Mathematics and Statistics,
Memorial University of Newfoundland, St. John's, NL, A1C5S7, Canada}
\email{t29ngp@mun.ca}

\author[S. Sadov]{Sergey Sadov$^{\diamond}$}
\thanks{$^{\diamond}$Partially supported by the Natural Sciences and Engineering
Research Council (NSERC) of Canada, Discovery Grant \# 312573-05.}
\address{Department of Mathematics and Statistics,
Memorial University of Newfoundland, St. John's, NL, A1C5S7, Canada}
\email{sergey@mun.ca}

\title[Counting fine gradings]{Counting fine gradings on matrix algebras and on classical simple Lie algebras}

\subjclass[2010]{Primary 17B70, 16W50, secondary 65A05.}

\keywords{Graded algebra, fine grading, simple Lie algebra}

\begin{abstract}
Known classification results allow us to find the number of (equivalence classes of) fine gradings on matrix algebras and on classical simple Lie algebras over an algebraically closed field $\FF$ (assuming $\chr{\FF}\ne 2$ in the Lie case). The computation is easy for matrix algebras and especially for simple Lie algebras of type $B_r$ (the answer is just $r+1$), but involves counting orbits of certain finite groups in the case of Series $A$, $C$ and $D$. For $X\in\{A,C,D\}$, we determine the exact number of fine gradings, $N_X(r)$, on the simple Lie algebras of type $X_r$ with $r\le 100$ as well as the asymptotic behaviour of the average, $\hat N_X(r)$, for large $r$. In particular, we prove that there exist positive constants $b$ and $c$ such that $\exp(br^{2/3})\le\hat N_X(r)\le\exp(cr^{2/3})$. The analogous average for matrix algebras $M_n(\FF)$ is proved to be $a\ln n+O(1)$ where $a$ is an explicit constant depending on $\chr{\FF}$.
\end{abstract}

\maketitle


\section{Introduction}

Let $\cA$ be an algebra (not necessarily associative) over a field $\FF$ and let $G$ be a semigroup (written multiplicatively).

\begin{df}\label{df:G_graded_alg}
A {\em $G$-grading} on $\cA$ is a vector space decomposition
\[
\Gamma:\;\cA=\bigoplus_{g\in G} \cA_g
\]
such that
\[
\cA_g \cA_h\subset \cA_{gh}\quad\mbox{for all}\quad g,h\in G.
\]
If such a decomposition is fixed, $\cA$ is referred to as a {\em $G$-graded algebra}.
The {\em support} of $\Gamma$ is the set $\supp\Gamma\bydef\{g\in G\;|\;\cA_g\neq 0\}$.
\end{df}

The reader may consult the recent monograph \cite{EKmon} for background on gradings. In particular, there is more than one natural  equivalence relation on graded algebras, depending on whether or not it is desirable to fix $G$. In this paper we will use the following version, where $G$ is not fixed. 

\begin{df}\label{df:equ_grad}
Let $\Gamma:\, \cA=\bigoplus_{g\in G} \cA_g$ and $\Gamma':\,\cB=\bigoplus_{h\in H} \cB_h$ be two graded algebras, with supports $S$ and $T$, respectively.
We say that the graded algebras $\cA$ and $\cB$ (or the gradings $\Gamma$ and $\Gamma'$) are {\em equivalent} if there exists an isomorphism of algebras $\vphi\colon\cA\to\cB$ and a bijection $\alpha\colon S\to T$ such that $\varphi(\cA_s)=\cB_{\alpha(s)}$ for all $s\in S$.
\end{df}

It is known that if $\Gamma$ is a grading on a simple Lie algebra by any semigroup, then $\supp\Gamma$ generates an abelian group (see e.g. \cite{Ksur} or \cite[Proposition 1.12]{EKmon}). From now on, we will assume that all gradings are by {\em abelian groups}, which will be written additively. The cyclic group $\ZZ/m\ZZ$ will be denoted by $\ZZ_m$. We will also assume that the ground field $\FF$ is {\em algebraically closed}.

The so-called fine gradings on an algebra (defined below) are of special importance since they reveal the structure of the algebra and its automorphism group (if $\chr\FF=0$, then the fine gradings on a finite-dimensional algebra $\cA$ correspond to maximal quasitori in the automorphism group of $\cA$). 

\begin{df}\label{df:fine_grad}
Let $\Gamma:\,\cA=\bigoplus_{g\in G} \cA_g$ and $\Gamma':\,\cA=\bigoplus_{h\in H} \cA'_h$ be two gradings on the same algebra, with supports $S$ and $T$, respectively.
We will say that $\Gamma'$ is a {\em refinement} of $\Gamma$ (or $\Gamma$ is a {\em coarsening} of $\Gamma'$) if for any $t\in T$ there exists (unique) $s\in S$ such that $\cA'_t\subset\cA_s$. If, moreover, $\cA'_t\ne\cA_s$ for at least one $t\in T$, then the refinement is said to be {\em proper}. Finally, $\Gamma$ is said to be {\em fine} if it does not admit any proper refinements (in the class of gradings by abelian groups).
\end{df}

Gradings have recently been classified for many interesting algebras (see e.g. \cite{EKmon} and references therein). In particular, a classification of fine gradings up to equivalence is known for matrix algebras over an algebraically closed field $\FF$ of arbitrary characteristic \cite{HPP98,BSZ,BZ03} and for classical simple Lie algebras except $D_4$ in characteristic different from $2$ \cite{E09d,EK_Weyl2}. Type $D_4$ is different from all other members of Series $D$ due to the phenomenon of triality. In \cite{E09d}, fine gradings on the simple Lie algebra of type $D_4$ are classified in characteristic $0$; there are $17$ equivalence classes. As to the exceptional simple Lie algebras, fine gradings are classified for type $G_2$ ($\chr\FF\ne 2,3$) in  \cite{DM_g2,EK_g2f4}, for type $F_4$ ($\chr\FF\ne 2$) in \cite{DM_f4,EK_g2f4}, and for type $E_6$ ($\chr\FF=0$) in \cite{DV_e6}. The number of equivalence classes is, respectively, $2$, $4$ (only $3$ in the case $\chr\FF=3$), and $14$.

In the present paper, we are interested in the number of (equivalence classes of) fine gradings for matrix algebras and for classical simple Lie algebras of Series $A$, $C$ and $D$. It is easy to see that there are $2$ fine gradings on $\Sl_2(\FF)$ ($\chr{\FF}\ne 2$). The number of fine gradings on a few other members of these series over $\CC$ (and on their real forms) have been found in \cite{HPP98sl3,PPS01,PPS02,Svo08} using the description of maximal quasitori (``MAD subroups'') in \cite{HPP98}. The more recent classification results, as stated in \cite{EK_Weyl2} and \cite{EKmon}, reduce the problem to counting orbits of certain finite groups, so the number of fine gradings can be computed, in principle, for any member of these series over an algebraically closed field of characteristic different from $2$. Note that there is no work to be done for Series $B$ because there are exactly $r+1$ gradings on the simple Lie algebra of type $B_r$ (see \cite{E09d} or \cite[\S 3.4]{EKmon}).  We count the orbits using Burnside--Cauchy--Frobenius Lemma and the computer algebra system GAP (see \cite{GAP4}) to obtain the exact number of fine gradings for simple Lie algebras of types $A_r$, $C_r$ and $D_r$ up to $r=100$ (see Tables \ref{t:gradings_A}, \ref{t:gradings_C} and \ref{t:gradings_D}, respectively). The number of fine gradings on $M_n(\FF)$ is easily computed since it is expressed in terms of the partition function and the multiplicities of the prime factors of $n$. We state the answer for $n$ up to $100$ for completeness (see Table \ref{t:gradings_M}). The behaviour of the average number of fine gradings on $M_j(\FF)$ with $j\le n$ as $n\to\infty$ (Theorem \ref{thm:asymgradings_M}) is  derived from the known asymptotics of the number of abelian groups of order $\le n$ \cite{ErdSzek}. We also establish the asymptotic behaviour of the average number of fine gradings for simple Lie algebras of Series $A$ (Theorem \ref{thm:asymgradings_A}), $C$ (Theorem \ref{thm:asymgradings_C}) and $D$ (Theorem \ref{thm:asymgradings_D}); this number exhibits {\em intermediate growth}: faster than any polynomial but slower than any exponential. The proof of these results is based on the asymptotic analysis of certain binomial coefficients (Section \ref{sec:asymACD}).


\section{Matrix Algebras}
\label{sec:M}

Fine gradings on $M_n(\CC)$ were classified in \cite{HPP98} in terms of the corresponding ``MAD subgroups'' of $\mathrm{PGL_n(\CC)}$. The approach in \cite{BSZ} was to look directly at the structure of the graded algebra, which allowed a  generalization to $M_n(\FF)$ over any algebraically closed field \cite{BZ03}. We state the classification using the notation of \cite[Corollary 2.6]{EK_Weyl1} and \cite[\S 2.3]{EKmon}. We do not explain this notation here, as our present concern is the number of gradings and not their explicit form. The subscript $M$ stands for matrices; later we will use subscripts $A$, $C$ and $D$ for the corresponding series of classical Lie algebras.

\begin{theorem}[\cite{HPP98,BZ03}]\label{matr_fine}
Let $\Gamma$ be a fine abelian group grading on the matrix algebra $M_n(\FF)$ over an algebraically closed field $\FF$. Then $\Gamma$ is equivalent to some $\M(T,k)$ where $T$ is a finite abelian group of the form $\ZZ_{\ell_1}^2\times\cdots\times\ZZ_{\ell_r}^2$ (i.e., a Cartesian square), $\chr\FF$ does not divide $|T|$, and  $k\ell_1\cdots\ell_r=n$. Two gradings $\M(T_1,k_1)$ and $\M(T_2,k_2)$ are equivalent if and only if $T_1\cong T_2$ and  $k_1=k_2$.\hfill{$\square$}
\end{theorem}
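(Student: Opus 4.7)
The plan is to establish the theorem in three stages: construct the gradings $\M(T,k)$ as claimed, show every fine grading on $M_n(\FF)$ is equivalent to one of them, and identify $(T,k)$ as invariants of the equivalence class.

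For the construction, start from $T=\ZZ_{\ell_1}^2\times\cdots\times\ZZ_{\ell_r}^2$ with $\chr\FF\nmid|T|$. Combining the standard symplectic pairings $((a,b),(a',b'))\mapsto\zeta_{\ell_i}^{ab'-a'b}$ on each factor $\ZZ_{\ell_i}^2$, where $\zeta_{\ell_i}\in\FF$ is a primitive $\ell_i$-th root of unity, yields a non-degenerate alternating bicharacter $\beta\colon T\times T\to\FF^\times$. Any $2$-cocycle $\sigma$ with commutator $\beta$ defines a twisted group algebra $\mathcal{D}=\FF^\sigma T$, which is a graded-division algebra with one-dimensional homogeneous components, isomorphic to $M_\ell(\FF)$ as an ungraded algebra (where $\ell=\ell_1\cdots\ell_r$); non-degeneracy of $\beta$ is precisely what makes $\mathcal{D}$ central simple. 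Set $\M(T,k):=M_k(\mathcal{D})\cong M_n(\FF)$ and grade it by tensoring an elementary fine grading on $M_k(\FF)$ (matrix unit $e_{ij}$ receives weight $g_i-g_j$ for a fixed tuple $g_1,\dots,g_k$ of elements of a free abelian group of rank $k-1$) with the $T$-grading of $\mathcal{D}$. Every homogeneous component of the result is one-dimensional, so $\M(T,k)$ admits no proper refinement and is fine.

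For the converse, let $\Gamma$ be an arbitrary fine grading on $M_n(\FF)$ by its universal grading group $G$, which is abelian because $M_n(\FF)$ is simple. Then $M_n(\FF)$ is $G$-graded simple, and the main structural input, due to Bahturin--Sehgal--Zaicev and Bahturin--Zaicev, identifies graded-simple associative algebras over an algebraically closed field with $M_k(\mathcal{D})$ for some graded-division algebra $\mathcal{D}$ with support $T\subset G$, the $M_k$ factor carrying an elementary grading as above. Fineness forces $\dim\mathcal{D}_t=1$ for every $t\in T$: otherwise the $T$-grading on $\mathcal{D}$ would admit a proper refinement that lifts to $M_k(\mathcal{D})$. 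Consequently $\mathcal{D}=\FF^\sigma T$ as a twisted group algebra, and because $\mathcal{D}$ is an ungraded matrix algebra its commutator bicharacter $\beta(a,b)=\sigma(a,b)\sigma(b,a)^{-1}$ is non-degenerate. The symplectic decomposition of a finite abelian group with a non-degenerate alternating bicharacter then yields $T\cong\ZZ_{\ell_1}^2\times\cdots\times\ZZ_{\ell_r}^2$; the relations $\dim_\FF\mathcal{D}=|T|$ and $n^2=k^2|T|$ give $n=k\ell_1\cdots\ell_r$, while $\chr\FF\nmid|T|$ is forced by the semisimplicity of $\mathcal{D}$.

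For the uniqueness assertion, the integer $k$ is recovered as $\dim\cA_e$: under $\cA\cong M_k(\mathcal{D})$ the identity component is $\bigoplus_i E_{ii}\otimes\mathcal{D}_0\cong\FF^k$ once the $g_i$ are chosen (as fineness forces) so that $g_i-g_j\notin T$ for $i\neq j$. The group $T$ is recovered as the torsion subgroup of $G$, whose free quotient of rank $k-1$ is contributed by the elementary part of the grading. Both invariants are preserved by the equivalence of Definition \ref{df:equ_grad}, giving the stated criterion. The main obstacle in this programme is the graded-simple structure theorem together with the classification of non-degenerate alternating bicharacters on finite abelian groups; once those results are in hand, the remaining bookkeeping of invariants is routine.
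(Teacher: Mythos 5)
The paper does not prove Theorem~\ref{matr_fine}: it is quoted from \cite{HPP98,BZ03} (the terminal $\square$ marks it as a cited result), and the paper's own contribution is only the counting formula \eqref{eq:num_grad_M_1} derived from it. Your sketch follows the Bahturin--Sehgal--Zaicev / Bahturin--Zaicev approach that the paper explicitly attributes to \cite{BSZ,BZ03} (graded-simple structure theorem rather than MAD subgroups), and the overall plan is the right one. However, two claims need repair.

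First, and more seriously, in the construction step you assert that every homogeneous component of $\M(T,k)=M_k(\mathcal{D})$ is one-dimensional and conclude fineness from that. This is false for $k>1$: as you yourself note in the uniqueness paragraph, the identity component is $\bigoplus_i E_{ii}\otimes\mathcal{D}_e\cong\FF^k$, and more generally each degree of the form $(0,t)$ with $t\in T$ carries a $k$-dimensional component $\bigoplus_i E_{ii}\otimes\mathcal{D}_t$. So ``one-dimensional components, hence fine'' does not apply, and fineness of $\M(T,k)$ requires a genuine argument — one must show that any abelian-group refinement of the grading, in particular one splitting the diagonal, is incompatible with the off-diagonal structure (this is exactly why the Pauli grading on $M_2$ is \emph{not} a refinement of the Cartan grading, even though it splits the diagonal). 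Second, a minor misattribution: in the converse direction you derive $\dim\mathcal{D}_t=1$ from fineness, but over an algebraically closed field this is automatic for any graded-division algebra, since $\mathcal{D}_e$ is a finite-dimensional $\FF$-division algebra, hence equal to $\FF$, and all homogeneous components of a graded-division algebra have the same dimension. What fineness actually buys — and what you use correctly later — is that the tuple $g_1,\dots,g_k$ defining the elementary part must be pairwise distinct modulo $T$; if $g_i\equiv g_j$ for $i\ne j$ then, e.g., $E_{ii}\otimes\mathcal{D}_e$ and $E_{ij}\otimes\mathcal{D}_e$ lie in the same component and the grading admits a proper refinement.
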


It follows that the number of fine gradings on $M_n(\FF)$ is given by
\begin{equation}\label{eq:num_grad_M_1}
\NM(n)=\sum_{\ell\,|\,n}N_{ab}(\ell),
\end{equation}
where $N_{ab}(\ell)$ is the number of (isomorphism classes of) abelian groups of order $\ell$ (Online Encyclopaedia of Integer Sequences A000688) and, if $\chr{\FF}=p$, the summation is restricted to $\ell$ that are not divisible by $p$.

\subsection{Counting gradings}

Factoring $\ell=p_1^{m_1}\cdots p_s^{m_s}$ in equation \eqref{eq:num_grad_M_1}, where $p_i\ne\chr{\FF}$ are distinct primes, we obtain  $N_{ab}(\ell)=\parts(m_1)\cdots\parts(m_s)$ where $\parts(m)$ denotes the number of partitions of a non-negative integer $m$ (with the convention $\parts(0)=1$). Hence, if $n=p_1^{\alpha_1}\cdots p^{\alpha_s}_s$ and $\chr{\FF}=0$ or if $n=p_1^{\alpha_1}\cdots p^{\alpha_s}_s p^\alpha$ and $\chr{\FF}=p$ then equation \eqref{eq:num_grad_M_1} can be rewritten as
\begin{equation}\label{eq:num_grad_M_2}
\NM(n)=\prod_{i=1}^s\sum_{j=0}^{\alpha_i}\parts(j).
\end{equation}
Table \ref{t:gradings_M} displays the numbers $\NM(n)$ for $n\le 100$ in the case $\chr{\FF}=0$, which were calculated using equation \eqref{eq:num_grad_M_2}.

\begin{table}[h]
\begin{tabular}{ccccc}
\begin{tabular}{|c|c|}
\hline
$n$ & $\NM(n)$\\ 
\hline
1 & 1\\
\hline
2 & 2\\
\hline
3 & 2\\
\hline
4 & 4\\
\hline
5 & 2\\
\hline
6 & 4\\
\hline
7 & 2\\
\hline
8 & 7\\
\hline
9 & 4\\
\hline
10 & 4\\
\hline
11 & 2\\
\hline
12 & 8\\
\hline
13 & 2\\
\hline
14 & 4\\
\hline
15 & 4\\
\hline
16 & 12\\
\hline
17 & 2\\
\hline
18 & 8\\
\hline
19 & 2\\
\hline
20 & 8\\
\hline
\end{tabular}
&
\begin{tabular}{|c|c|}
\hline
$n$ & $\NM(n)$\\ 
\hline
21 & 4\\
\hline
22 & 4\\
\hline
23 & 2\\
\hline
24 & 14\\
\hline
25 & 4\\
\hline
26 & 4\\
\hline
27 & 7\\
\hline
28 & 8\\
\hline
29 & 2\\
\hline
30 & 8\\
\hline
31 & 2\\
\hline
32 & 19\\
\hline
33 & 4\\
\hline
34 & 4\\
\hline
35 & 4\\
\hline
36 & 16\\
\hline
37 & 2\\
\hline
38 & 4\\
\hline
39 & 4\\
\hline
40 & 14\\
\hline
\end{tabular}
&
\begin{tabular}{|c|c|}
\hline
$n$ & $\NM(n)$\\
\hline
41 & 2\\
\hline
42 & 8\\
\hline
43 & 2\\
\hline
44 & 8\\
\hline
45 & 8\\
\hline
46 & 4\\
\hline
47 & 2\\
\hline
48 & 24\\
\hline
49 & 4\\
\hline
50 & 8\\
\hline
51 & 4\\
\hline
52 & 8\\
\hline
53 & 2\\
\hline
54 & 14\\
\hline
55 & 4\\
\hline
56 & 14\\
\hline
57 & 4\\
\hline
58 & 4\\
\hline
59 & 2\\
\hline
60 & 16\\
\hline
\end{tabular}
&
\begin{tabular}{|c|c|}
\hline
$n$ & $\NM(n)$\\
\hline
61 & 2\\
\hline
62 & 4\\
\hline
63 & 8\\
\hline
64 & 30\\
\hline
65 & 4\\
\hline
66 & 8\\
\hline
67 & 2\\
\hline
68 & 8\\
\hline
69 & 4\\
\hline
70 & 8\\
\hline
71 & 2\\
\hline
72 & 28\\
\hline
73 & 2\\
\hline
74 & 4\\
\hline
75 & 8\\
\hline
76 & 8\\
\hline
77 & 4\\
\hline
78 & 8\\
\hline
79 & 2\\
\hline
80 & 24\\
\hline
\end{tabular}
&
\begin{tabular}{|c|c|}
\hline
$n$ & $\NM(n)$\\
\hline
81 & 12\\
\hline
82 & 4\\
\hline
83 & 2\\
\hline
84 & 16\\
\hline
85 & 4\\
\hline
86 & 4\\
\hline
87 & 4\\
\hline
88 & 14\\
\hline
89 & 2\\
\hline
90 & 16\\
\hline
91 & 4\\
\hline
92 & 8\\
\hline
93 & 4\\
\hline
94 & 4\\
\hline
95 & 4\\
\hline
96 & 38\\
\hline
97 & 2\\
\hline
98 & 8\\
\hline
99 & 8\\
\hline
100 & 16\\
\hline
\end{tabular}
\end{tabular}

\medskip

\caption{Number of fine gradings, $\NM(n)$, on the matrix algebra $M_n(\FF)$, where $\chr{\FF}=0$.}
\label{t:gradings_M}
\end{table}

\subsection{Asymptotic behaviour}

The function $\NM(n)$ behaves irregularly, so we introduce the following:
\begin{equation}\label{defNMA}
 \NMA(n)=\frac{1}{n}\sum_{j=1}^n \NM(j),
\end{equation}
which is the average number of fine gradings on the algebras $M_j(\FF)$ with $j\leq n$.

\begin{theorem}\label{thm:asymgradings_M}
Let $\FF$ be an algebraically closed field of characteristic $c$. The following asymptotic formula holds:
\begin{equation}\label{asymgradings_M0}
\NMA(n)=a_c \ln n +O(1).
\end{equation}

In the case $c=0$, the constant is
\begin{equation}\label{consta0}
 a_0=\prod_{m=2}^\infty \zeta(m)\approx 2.2948566;
\end{equation}
here $\zeta(\cdot)$ is the Riemann zeta function.

In the case $c>0$, the constant is
\begin{equation}\label{constac}
 a_c=a_0\,\prod_{m=2}^\infty (1-c^{-m}).
\end{equation}
\end{theorem}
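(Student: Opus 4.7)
The plan is to interchange the order of summation and reduce the theorem to a standard partial-sum asymptotic. From \eqref{eq:num_grad_M_1}, $\NM(j)=\sum_{\ell\mid j,\,(\ell,c)=1}N_{ab}(\ell)$ (with the coprimality restriction only in positive characteristic), so counting multiples gives
\[
n\NMA(n)=\sum_{j=1}^n\NM(j)=\sum_{\substack{\ell\le n\\(\ell,c)=1}}N_{ab}(\ell)\,\lfloor n/\ell\rfloor.
\]
Writing $\lfloor n/\ell\rfloor=n/\ell-\{n/\ell\}$ and bounding the fractional parts by $1$, the remainder is $O(A_c(n))$, where $A_c(n):=\sum_{\ell\le n,\,(\ell,c)=1}N_{ab}(\ell)$. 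Provided $A_c(n)=O(n)$, this reduces the proof to
\[
\NMA(n)=\sum_{\substack{\ell\le n\\(\ell,c)=1}}\frac{N_{ab}(\ell)}{\ell}+O(1).
\]

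The crucial analytic input is the classical Erd\H{o}s--Szekeres estimate $A(n):=\sum_{\ell\le n}N_{ab}(\ell)=a_0 n+O(\sqrt n)$ from \cite{ErdSzek}. In characteristic zero this is precisely what is needed: Abel summation
\[
\sum_{\ell\le n}\frac{N_{ab}(\ell)}{\ell}=\frac{A(n)}{n}+\int_1^n\frac{A(t)}{t^2}\,dt
\]
yields the main term $a_0\ln n$ together with a bounded error $\int_1^n O(t^{-3/2})\,dt=O(1)$, which proves \eqref{asymgradings_M0} with the constant \eqref{consta0}.

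In positive characteristic $c$, the same strategy applies once we establish an analogous asymptotic $A_c(n)=a_c n+O(\sqrt n)$ for the subseries restricted to integers coprime to $c$. By multiplicativity of $N_{ab}$ and the identity $N_{ab}(c^k)=\parts(k)$, every integer factors uniquely as $\ell=\ell'c^k$ with $(\ell',c)=1$, giving
\[
A(n)=\sum_{k\ge 0}\parts(k)\,A_c(n/c^k).
\]
Inverting this convolution---or, equivalently, computing the residue at $s=1$ of the Dirichlet series $F_c(s)=\prod_{p\ne c}\prod_{m\ge 1}(1-p^{-ms})^{-1}$ obtained from $\sum_\ell N_{ab}(\ell)/\ell^s=\prod_{m\ge 1}\zeta(ms)$ by removing the Euler factor at $c$, and appealing to a Tauberian theorem---extracts the constant $a_c$, with the factors $(1-c^{-m})$ arising from the generating identity $\sum_k\parts(k)x^k=\prod_{m\ge 1}(1-x^m)^{-1}$. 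The Abel-summation argument of the previous paragraph then delivers $\NMA(n)=a_c\ln n+O(1)$.

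The only delicate point is the passage from $A(n)$ to $A_c(n)$ and the verification that the product identifying $a_c$ comes out as claimed in \eqref{constac}. Everything else---the exchange of summations, the splitting of the floor function, and the Abel-summation step---is routine manipulation of partial sums and absolutely convergent error integrals.
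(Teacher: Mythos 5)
Your characteristic-zero argument is essentially the paper's: write $n\NMA(n)=\sum_{\ell\le n}N_{ab}(\ell)\lfloor n/\ell\rfloor$, invoke the Erd\H{o}s--Szekeres estimate, and finish with Abel/partial summation (you phrase it with a Stieltjes integral, the paper discretely; same idea). For $c>0$ you genuinely diverge: instead of re-running the Erd\H{o}s--Szekeres construction with the prime $c$ excised (the paper's one-line sketch), you propose the convolution $A(n)=\sum_{k\ge0}\parts(k)\,A_c(n/c^k)$ and its inversion, or equivalently removing the Euler factor at $c$ from the Dirichlet series and reading off the residue at $s=1$. That is cleaner, since it reuses the result of \cite{ErdSzek} rather than its proof, and the inversion does carry the $O(\sqrt n)$ error term through because the M\"obius-inverse coefficients of $\parts$ (from $\prod_{m\ge1}(1-x^m)$, pentagonal number theorem) are bounded by $1$.

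But the ``only delicate point'' you flag and then defer is exactly where your proposal parts company with the statement, and you should not have waved it away. Pushed through, both of your devices give
\begin{equation*}
  \lim_{n\to\infty}\frac{A_c(n)}{n}
  \,=\, a_0\prod_{m\ge1}\bigl(1-c^{-m}\bigr),
\end{equation*}
with the product starting at $m=1$, whereas \eqref{constac} starts it at $m=2$. Indeed, the residue at $s=1$ of $F_c(s)=\prod_{m\ge1}\zeta(ms)(1-c^{-ms})$ is $(1-c^{-1})\prod_{m\ge2}\zeta(m)(1-c^{-m})$, and the convolution inversion gives $a_0=a_c\sum_{k\ge0}\parts(k)c^{-k}=a_c\prod_{m\ge1}(1-c^{-m})^{-1}$, so again $a_c=a_0\prod_{m\ge1}(1-c^{-m})$. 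The missing factor $(1-c^{-1})$ is the $m=1$ Euler factor: in the Erd\H{o}s--Szekeres count it is what yields the leading volume $\lfloor n/\prod_{m\ge2}a_m^m\rfloor$, and restricting that free variable $a_1$ to integers coprime to $c$ shrinks the volume by $(1-c^{-1})$, a correction the paper's sketch (``the prime $c$ is not participating in any products'') does not make. A spot check supports your constant, not the stated one: $\sum_{\ell\le100,\ \ell\ \text{odd}}N_{ab}(\ell)=63$, close to $a_0\prod_{m\ge1}(1-2^{-m})\cdot100\approx66$ and nowhere near $a_2\cdot100\approx133$ with the value $a_2\approx1.3255$ from Table~\ref{t:constants}. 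So your route is sound; carried out honestly it shows the constant in \eqref{constac} (hence Table~\ref{t:constants}) is off by the factor $(1-c^{-1})$, and a complete proof should say so rather than treat the identification of $a_c$ as routine.
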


The infinite product appearing in the formula for $a_c$ is a particular case of {\it $q$-Pochhammer symbol} (with $q=1/c$)
known in the theory of elliptic functions, theory of partitions, and elsewhere. In standard notation, it is abbreviated as
\[
\prod_{m=2}^\infty (1-c^{-m}) = (c^{-2}; c^{-1})_\infty.
\]
The numerical values of $(c^{-2};c^{-1})_\infty$ for prime $c\le 13$ and the corresponding values of the constants $a_c$ are given in Table \ref{t:constants}.

\begin{table}[h]
\begin{tabular}{|c|c|c|c|c|c|c|}
\hline
$c$ & 2 & 3 & 5 & 7 & 11 & 13\\
\hline
$(c^{-2};c^{-1})_\infty$ & 0.577576 & 0.840189 & 0.950416 & 0.976261 & 0.990916 & 0.993593\\
\hline
$a_c$ & 1.325455 & 1.928114 & 2.181068 & 2.240380 & 2.274010 & 2.280153\\
\hline
\end{tabular}

\medskip

\caption{Numerical values of $(c^{-2};c^{-1})_\infty$ and $a_c$.}
\label{t:constants}
\end{table}

\begin{proof}
(a) Case $\chr\FF=0$. We refer to the classical result \cite{ErdSzek} on the average number of abelian groups of order $j$ with $j\leq n$:
\begin{equation}\label{asymabgrp0}
\frac{1}{n}\sum_{j=1}^n N_{ab}(j)= a_0 +O(n^{-1/2}).
\end{equation}
For simplicity denote the sum on the left side of \eqref{asymabgrp0} by $F(n)$ and also set $F(0)=0$ by definition.
Then
\[
\begin{split}
  n\NMA(n) &=
\sum_{j\leq n}\;\sum_{\ell\,|\,j} N_{ab}(\ell)=
\sum_{j=1}^n N_{ab}(j) \left\lfloor\frac{n}{j}\right\rfloor
\\[2ex]  &=
\sum_{j=1}^n (F(j)-F(j-1)) \frac{n}{j} - \sum_{j=1}^n N_{ab}(j) \left(\frac{n}{j}-\left\lfloor\frac{n}{j}\right\rfloor\right).
\end{split}
\]
The second sum has positive terms and is majorized by $\sum_{j=1}^n N_{ab}(j)=O(n)$.
Now,
\[
 \sum_{j=1}^n (F(j)-F(j-1)) \frac{n}{j} = F(n)+n\sum_{j=1}^{n-1} \frac{F(j)}{j(j+1)};
\]
here $F(n)=O(n)$, the sum
\[
 \sum_{j=1}^{n-1} \frac{F(j)-a_0 j}{j(j+1)}
\]
is $O(1)$ since its terms are $O(j^{-3/2})$ by \eqref{asymabgrp0},
while
\[
 \sum_{j=1}^n\frac{a_0 j}{j(j+1)}= a_0\ln n+ O(1).
\]
This completes the proof for $\chr\FF=0$.

\medskip
(b) Case $c=\chr\FF>0$.
The summatory function $n\NMA(n)$ in this case is
\[
 n\NMA(n)=\sum_{j\leq n}\;\sum_{\ell\,|\,j,\;c\,\nmid\,\ell} N_{ab}(\ell) =
\sum_{\ell=1}^n f(\ell) \left\lfloor\frac{n}{\ell}\right\rfloor,
\]
where we set
\[
 f(n)=\left\{ \begin{array}{ll}
   N_{ab}(n) & \mbox{if}\; c\nmid n, \\
   0         & \mbox{if}\; c\,|\,n.
\end{array}
\right.
\]
Similarly to (a), the formula \eqref{asymgradings_M0} with constant \eqref{constac} will follow
from the asymptotics of the summatory function $F(n)= \sum_{j=1}^n f(j)$
\begin{equation}\label{asymabgrpc}
F(n) = a_c n +O(n^{1/2}).
\end{equation}

A proof of \eqref{asymabgrpc} is a slight variation of the proof of \eqref{asymabgrp0} given in \cite{ErdSzek}.
The argument goes through with only one change: the prime $c$ is not participating in any products.
This results in the constant $A_1$ of \cite{ErdSzek}, which equals our $a_0$, being replaced by
\[
a_c= \prod_{m=2}^\infty \prod_{p\neq c} (1+p^{-m}+p^{-2m}+\cdots),
\]
where the inner product runs over all primes $p\neq c$. Thus
\[
 a_c=\prod_{m=2}^\infty \zeta(m)(1-c^{-m}) = a_0\, \prod_{m=2}^\infty (1-c^{-m}).
\]
\end{proof}

\begin{remark}
The asymptotic formula \eqref{asymabgrp0} has been significantly refined by many later authors, see e.g.
\cite{Liu,HB}. There is little doubt that formula \eqref{asymgradings_M0} can be refined similarly, but this endeavour is beyond the scope of the current paper. 
\end{remark}


\section{Lie Algebras of Series $A$}
\label{sec:A}

The classification of fine gradings on all simple Lie algebras of Series $A$ was established in \cite{E09d} for the case $\chr{\FF}=0$. We state the result in purely combinatorial terms, as it appears in \cite{EK_Weyl2} and \cite[\S 3.3]{EKmon}. There are two types of gradings, which we distinguish using superscripts (I) and (II). The subscript $A$ refers to the series of Lie algebras. We do not introduce the gradings explicitly because we are only interested in their number. A {\em multiset} in a set $X$ is a function $X\to\ZZ_{\ge 0}$ that assigns to each point its multiplicity. If a group $G$ acts on $X$, then it also acts on the multisets in $X$. The relevant group here is $\ASP_{2m}(2)$, the semidirect product $\ZZ_2^{2m}\rtimes\SP_{2m}(2)$ of the symplectic group $\SP_{2m}(2)$ and the vector group $\ZZ_2^{2m}$. Each  element $(t,A)\in\ZZ_2^{2m}\rtimes\SP_{2m}(2)$ acts on $\ZZ_2^{2m}$ in the natural way: $x\mapsto Ax+t$.

\begin{theorem}[\cite{E09d,EK_Weyl2}]\label{A_fine}
Let $\FF$ be an algebraically closed field, $\chr{\FF}\neq 2$. Let $n\ge 3$ if $\chr{\FF}\ne 3$ and $n\ge 4$ if $\chr{\FF}=3$. Then any fine grading on $\Psl_n(\FF)$ is equivalent to one of the following:
\begin{itemize}
\item $\AI(T,k)$ where $T$ is as in Theorem \ref{matr_fine}, $k$ is a positive integer, $k\sqrt{|T|}=n$, and $k\ge 3$ if $T$
is an elementary $2$-group;
\item $\AII(T,q,s,\tau)$ where $T$ is an elementary $2$-group of even
rank, $q$ and $s$ are non-negative integers, $(q+2s)\sqrt{|T|}=n$,
$\tau=(t_1,\ldots,t_q)$ is a $q$-tuple of elements of $T$, and $t_1\ne
t_2$ if $q=2$ and $s=0$.
\end{itemize}
Gradings belonging to different types listed above are not equivalent.
Within each type, we have the following:
\begin{itemize}
\item $\AI(T_1,k_1)$ and $\AI(T_2,k_2)$ are equivalent if and only if\\
$T_1\cong T_2$ and $k_1=k_2$;
\item $\AII(T_1,q_1,s_1,\tau_1)$ and $\AII{}(T_2,q_2,s_2,\tau_2)$ are
equivalent if and only if\\ $T_1\cong T_2$, $q_1=q_2$, $s_1=s_2$ and,
identifying $T_1=T_2=\ZZ_2^{2m}$, $\Sigma(\tau_1)$ is conjugate to
$\Sigma(\tau_2)$ by the natural action of $\ASP_{2m}(2)$,
where $\Sigma(\tau)$ is the multiset for which the multiplicity of any point $t$ is the number of times $t$ appears among the components of the $q$-tuple $\tau$.\qed
\end{itemize}
\end{theorem}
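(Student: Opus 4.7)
The plan is to convert the classification of fine abelian group gradings on $\Psl_n(\FF)$ into a classification, up to conjugacy, of maximal quasi-tori (``MAD subgroups'') of the affine algebraic group $\mathrm{Aut}(\Psl_n(\FF))$; two fine gradings are equivalent if and only if the associated quasi-tori are conjugate. For $n\geq 3$ (and $n\geq 4$ when $\chr\FF=3$, to avoid small-dimensional exceptions), $\mathrm{Aut}(\Psl_n(\FF))$ contains $\mathrm{PGL}_n(\FF)$ as an index-$2$ subgroup, the outer coset being represented by the Chevalley involution $\theta\colon X\mapsto -X^T$ (modulo scalars). Any maximal quasi-torus $Q$ therefore either lies inside $\mathrm{PGL}_n(\FF)$, giving Type I, or projects surjectively onto the outer $\ZZ_2$, giving Type II.

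First I would treat Type I. Pulling $Q\subset\mathrm{PGL}_n(\FF)$ back to $\mathrm{GL}_n(\FF)$ and applying Theorem \ref{matr_fine} yields a fine grading $\M(T,k)$ on $M_n(\FF)$, which descends to $\Sl_n(\FF)$ and hence to $\Psl_n(\FF)$, producing a grading equivalent to $\AI(T,k)$. To show this descended grading is actually fine, one checks that no outer element of $\mathrm{Aut}(\Psl_n(\FF))$ commutes with $Q$. This turns out to fail precisely when $T$ is an elementary $2$-group and $k\leq 2$; in those borderline cases $\AI(T,k)$ is properly refined by a Type II grading, which explains the restriction $k\geq 3$ in the elementary $2$-group case.

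For Type II, set $Q^\circ = Q\cap\mathrm{PGL}_n(\FF)$; then $Q^\circ$ induces a grading on $M_n(\FF)$ by some abelian group $T$, and any lift $\varphi$ of a generator of $Q/Q^\circ$ is a $T$-graded anti-automorphism of $M_n(\FF)$. Diagonalisability together with maximality of $Q$ force $T=\ZZ_2^{2m}$ carrying a non-degenerate symplectic form (coming from the commutator of lifts in $\mathrm{GL}_n(\FF)$), and force $\varphi$ to square to $\pm 1$ on each homogeneous component. The anti-automorphism $\varphi$ either preserves each component (giving $q$ fixed components with labels $t_i\in T$) or swaps them pairwise (giving $2s$ components), with the dimension count $(q+2s)\sqrt{|T|}=n$. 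The excluded case $q=2$, $s=0$, $t_1=t_2$ is exactly the one in which the two ``fixed'' components can be recombined to produce instead a Type I grading.

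The hardest step is the equivalence analysis. Conjugation inside $\mathrm{Aut}(\Psl_n(\FF))$ induces on $T$ exactly the symplectic group $\SP_{2m}(2)$, while the freedom of replacing a lift $\varphi$ by $\varphi\circ\mathrm{ad}(u)$ with $u$ homogeneous of degree $s\in T$ translates each $t_i$ by $s$; the combined action is that of $\ASP_{2m}(2)$. Since permutations of the $q$ fixed components of $\varphi$ are already realised inside the normaliser of $Q$, the $q$-tuple $\tau=(t_1,\ldots,t_q)$ only matters through the multiset $\Sigma(\tau)$. The bulk of the technical work is then to verify that no larger group acts, that the Type I and Type II lists are disjoint outside the borderline identifications already accounted for, and that the combinatorial data $(T,k)$ and $(T,q,s,\Sigma(\tau))$ indeed recover $Q$ up to conjugacy in $\mathrm{Aut}(\Psl_n(\FF))$.
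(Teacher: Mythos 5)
This theorem is not proved in the paper; it is quoted as a known classification result from \cite{E09d} and \cite{EK_Weyl2} (indicated by the citation in the theorem header and the terminal \verb|\qed| with no following proof environment). So there is no in-paper proof against which to compare your proposal.

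That said, your sketch does capture the correct high-level architecture of the classification argument actually used in those references: passing from equivalence classes of fine gradings to conjugacy classes of maximal quasi-tori (MAD subgroups) of $\mathrm{Aut}(\Psl_n(\FF))$; splitting into Type I/Type II according to whether the quasi-torus $Q$ lies in $\mathrm{PGL}_n(\FF)$ or surjects onto the outer $\ZZ_2$; reading off the Type I data from Theorem \ref{matr_fine}; identifying the outer lift as a graded anti-automorphism and extracting the data $(T,q,s,\tau)$; and realising the equivalence classification through the action of $\ASP_{2m}(2)$ coming from symplectic conjugations together with translations by homogeneous elements. The two borderline phenomena you flag (the $k\ge 3$ restriction in Type I and the $q=2$, $s=0$, $t_1=t_2$ exclusion in Type II) are indeed the places where the naive dichotomy needs correction, and you locate them correctly. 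However, as written this is a program rather than a proof: the assertions that maximality forces $T\cong\ZZ_2^{2m}$ with a nondegenerate alternating form, that the outer lift can be normalised so as to square to $\pm 1$ on homogeneous components, that $\mathrm{Sp}_{2m}(2)$ and the translations are exactly the group that acts (neither more nor less), and that no hidden equivalences link the two types beyond the cases you exclude, are each substantial lemmas in \cite{E09d,EK_Weyl2} requiring careful case analysis. If you intend to supply a self-contained proof, these steps must be filled in; if your goal is only to justify citing the result, it would be cleaner simply to cite it as the paper does.
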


Since, for Series $A$, the rank $r$ is related to the matrix size $n$ as $n=r+1$, Theorem \ref{A_fine} implies that the number of fine gradings of Type I on the simple Lie algebra $A_r$ ($r\ge 2$) is given by
\begin{equation}\label{eq:num_grad_AI}
\NAI(r)=\left\{\begin{array}{ll}
               \NM(r+1)-2 &\mbox{if $r+1$ is a power of 2},\\
               \NM(r+1)   &\mbox{otherwise},
               \end{array}\right.
\end{equation}
where $\NM(n)$ is the number of fine gradings on $M_n(\FF)$, which is discussed in the previous section. 

In order to calculate the number of fine gradings of Type II, we need to determine the number of orbits, $N(m,q)$,
of $\ASP_{2m}(2)$ on multisets of size $q$ in $T=\ZZ_2^{2m}$. Clearly, $N(m,q)$ does not exceed the total number of such multisets. Note that any multiset of size $q$ determines a partition of the integer $q$ by looking at the (nonzero) multiplicities and forgetting to which points in $T$ they belong. Clearly, any permutation of $T$ leaves invariant the set of multisets belonging to a fixed partition. Hence, we obtain bounds:
\begin{equation}\label{eq:triv_bounds}
\parts_{2^{2m}}(q)\le N(m,q)\le\binom{q+2^{2m}-1}{q},
\end{equation}
where $\parts_M(q)$ is the number of partitions of $q$ into at most $M$ positive parts. Note that $\ASP_2(2)$ is the full group of permutations on $\ZZ_2^2$, hence the lower bound is achieved if $m=1$. It is also achieved if $q\le 2$ because $\SP_{2m}(2)$ acts irreducibly and hence $\ASP_{2m}(2)$ acts $2$-transitively on $\ZZ_2^{2m}$.

Another lower bound for $N(m,q)$, which is better than that in \eqref{eq:triv_bounds} for any fixed $m>1$ and sufficiently large $q$, comes from the obvious fact that the size of a $G$-orbit cannot exceed the size of $G$. Thus
\begin{equation}\label{eq:lower_bound_G}
\frac{1}{|G_m|}\binom{q+2^{2m}-1}{q}\le N(m,q),
\end{equation}
where $G_m=\ASP_{2m}(2)$. This bound is indeed better as $q\to\infty$, because $\binom{q+M-1}{q}\sim\frac{q^{M-1}}{(M-1)!}$,  $G_m$ is not the full group of permutations if $m>1$, and it is known that $P_M(q)\sim \frac{1}{M!}\frac{q^{M-1}}{(M-1)!}$. The upper bound in \eqref{eq:triv_bounds} and the lower bound \eqref{eq:lower_bound_G} will be used
to obtain asymptotic results in Section~\ref{sec:asymACD}.

\subsection{Counting orbits}

We start with a few general remarks. It is customary to write partitions as decreasing sequences of positive integers: $\kappa=(k_1,\ldots,k_\ell)$, where $k_1\ge\ldots\ge k_\ell$ and $\ell=\ell(q)$ is called the {\em length} of $\kappa$. The notation $\kappa\vdash q$ means $\sum_j k_j=q$. We can also write $\kappa=(q_1^{(\ell_1)},\ldots,q_s^{(\ell_s)})$ where $q_1>\ldots>q_s$ and the superscript $(\ell_j)$ indicates the number of times the value $q_j$ is repeated; $\sum_j\ell_j=\ell$. For example, $(4,4,4,3,1)$ can be written as $(4^{(3)},3^{(1)},1^{(1)})$ or just $(4^{(3)},3,1)$. When working with partitions of length $\le M$ for a fixed $M$, it is sometimes convenient to append zeros at the end so the total number of parts is formally $M$. For example, with $M=7$, the partition $(4^{(3)},3,1)$ may be written as $(4^{(3)},3,1,0^{(2)})$.

Let $T$ be a set of $M$ elements. As pointed out above, any multiset of size $q$ in $T$ determines a partition $\kappa\vdash q$ of length $\le M$, and the set of all multisets belonging to a fixed $\kappa\vdash q$ is invariant under any group $G$ acting on $T$. For $T=\ZZ_2^{2m}$ and $G=G_m$, let $N(m,\kappa)$ be the number of orbits in this set. Thus 
\begin{equation}\label{eq:Npartitions}
N(m,q)=\sum_{\kappa\vdash q,\,\ell(\kappa)\le 2^{2m}}N(m,\kappa).
\end{equation}

Similarly, a partition $\kappa=(q_1^{(\ell_1)},\ldots,q_s^{(\ell_s)})$ of length $M$ determines a partition of $M$ given by sorting the sequence $(\ell_1,\ldots,\ell_s)$. If $\kappa$ has length $\ell<M$, we write $\kappa=(q_1^{(\ell_1)},\ldots,q_s^{(\ell_s)},0^{(M-\ell)})$ and sort the sequence $(\ell_1,\ldots,\ell_s,M-\ell)$. In other words, we regard $\kappa$ as a multiset of size $M$ in $\ZZ_{\ge 0}$ and assign to it a partition of $M$ as was done before to multisets in $T$. For example, with $M=7$, the partition $(4,4,4,3,1)$ gives $(3,2,1,1)$. 

If $\kappa=(k_1,\ldots,k_M)$ and $\kappa'=(k'_1,\ldots,k'_M)$ are partitions of length $\le M$ that determine the same partition of $M$, then there is a bijection $\vphi\colon\ZZ_{\ge 0}\to\ZZ_{\ge 0}$ such that $k'_j=\vphi(k_j)$ for all $j$. If $T$ is a set of $M$ elements, then post-composition with any such $\vphi$ defines a bijection from the multisets in $T$ belonging to $\kappa$ to those belonging to $\kappa'$. Clearly, this bijection is $G$-equivariant for any group $G$ acting on $T$. In particular, we have $N(m,\kappa)=N(m,\kappa')$.

The above remarks show that, for a fixed $m$, the structure of $G_m$-orbits on multisets of all sizes in $T=\ZZ_2^{2m}$ is determined by the orbits belonging to a finite number of partitions. However, this number is very large: $\parts(M)\sim\frac{1}{4\sqrt{3}M}\exp\Big(\pi\sqrt{\frac{2M}{3}}\Big)$, $M=2^{2m}$.

For small values of $m$ and $q$, one can find the orbits on multisets of size $q$ using a standard function of GAP. (To speed up the calculation, one can work separately with multisets belonging to each partition $\kappa\vdash q$.) If we do not need representatives of the orbits (say, in order to construct explicitly all fine gradings of Type II on a simple Lie algebra of Series $A$) and just want to count their number then we can use the following well-known fact.

\begin{lemma}[Burnside--Cauchy--Frobenius]\label{BCF}
Let $G$ be a finite group acting on a finite set $X$. Then the number of orbits equals the average number of fixed points:
\[
|X/G|=\frac{1}{|G|}\sum_{g\in G}\fixed(g),
\]
where $\fixed(g)$ is the number of points in $X$ fixed by $g$.\qed
\end{lemma}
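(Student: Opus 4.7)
The plan is to run the standard double counting argument on the incidence set
\[
S = \{(g,x)\in G\times X : g\cdot x = x\}.
\]
Counting $S$ by the first coordinate yields $|S| = \sum_{g\in G}\fixed(g)$ directly from the definition of $\fixed(g)$. Counting by the second coordinate yields $|S| = \sum_{x\in X}|G_x|$, where $G_x=\{g\in G:g\cdot x=x\}$ is the stabilizer of $x$.

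Next I would invoke the orbit--stabilizer theorem, $|G_x|\cdot|G\cdot x| = |G|$ for each $x\in X$, to rewrite
\[
\sum_{x\in X}|G_x| = |G|\sum_{x\in X}\frac{1}{|G\cdot x|}.
\]
Finally, I would decompose $X$ into its $G$-orbits. For any orbit $O$, each of its $|O|$ members contributes $1/|O|$ to the sum, so the contribution of $O$ to the right-hand side is exactly $1$. Summing over the orbits gives $\sum_{x\in X}1/|G\cdot x| = |X/G|$, whence
\[
\sum_{g\in G}\fixed(g) = |G|\cdot|X/G|,
\]
which rearranges to the claimed formula.

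There is no genuine obstacle here: the orbit--stabilizer theorem is the only nontrivial ingredient, and it is itself a one-line consequence of the bijection $G/G_x\to G\cdot x$, $hG_x\mapsto h\cdot x$. The entire argument is a textbook double-counting exercise and can be presented in a few lines.
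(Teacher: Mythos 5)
Your argument is correct and is the standard double-counting proof via the orbit--stabilizer theorem. The paper itself states this lemma as a ``well-known fact'' and gives no proof (it is marked \qed immediately after the statement), so there is no authorial proof to compare against; your write-up simply supplies the textbook argument the paper chose to omit.
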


An important addition to this lemma is the observation that if $g$ and $g'$ are conjugate, then $\fixed(g)=\fixed(g')$, so
\begin{equation}\label{eq:BCF}
|X/G|=\frac{1}{|G|}\sum_i c_i\fixed(g_i),
\end{equation}
where the summation is over the conjugacy classes of $G$, with $g_i$ and $c_i$ being a representative and the size of the $i$-th class. The number of conjugacy classes in $G_m$ is small compared to $|G_m|$. 

We also used the following observation to make the calculation of the number of fixed points more efficient. Let $g$ be a permutation on a set $T$ of size $M$ and let $\lambda=(\lambda_1,\ldots,\lambda_\ell)\vdash M$ be the cycle structure of $g$, i.e., the $\lambda_j$ are the cycle lenghs of $g$ (including the trivial cycles). Let $\kappa$ be a partition of length $\le M$ and let $\mu=(\mu_1,\ldots,\mu_s)$ be the corresponding partition of $M$. Then the number of fixed points of $g$ among the multisets in $T$ belonging to $\kappa$ is equal to the number of functions $\vphi\colon\{1,\ldots,\ell\}\to\{1,\ldots,s\}$ such that $\sum_{j\,:\,\vphi(j)=k}\lambda_j=\mu_k$ for all $k=1,\ldots,s$. We denote this number by $\fits{\lambda}{\mu}$. Informally, $\fits{\lambda}{\mu}$ is the number of ways to fit $\ell$ pigeons of volumes given by $\lambda$ into $s$ holes of capacities given by $\mu$ so that each hole is filled to its full capacity. For example, for $\lambda=(1^{(M)})$ and $\mu=(M)$ we have $\fits{\lambda}{\mu}=1$; for $\lambda=(M)$ and $\mu=(1^{(M)})$ we have $\fits{\lambda}{\mu}=0$; for $\lambda=(3,3,2)$ and $\mu=(5,3)$ we have $\fits{\lambda}{\mu}=2$. (Pigeons are distinguishable even if they have the same volume; holes are distinguishable even if they have the same capacity.) 

Hence equation \eqref{eq:BCF} can be rewritten as
\begin{equation}\label{eq:N_BCFkappa}
N(m,\kappa)=\frac{1}{|G_m|}\sum_{\lambda\vdash 2^{2m}}c(\lambda)\fits{\lambda}{\mu},
\end{equation}
where $c(\lambda)$ is the sum of $c_i$ over all $i$ such that $g_i$ has cycle structure $\lambda$. Finally, let $\partsmuq{\mu}{q}$ be the number of partitions $\kappa\vdash q$ of length $\le M$ such that the corresponding partition of $M$ is $\mu$. Then, in view of \eqref{eq:N_BCFkappa}, we can rewrite \eqref{eq:Npartitions} as follows:
\begin{equation}\label{eq:N_BCFlambdamu}
N(m,q)=\frac{1}{|G_m|}\sum_{\lambda,\mu\vdash 2^{2m}}c(\lambda)\fits{\lambda}{\mu}\partsmuq{\mu}{q}.
\end{equation}
Note that the number of partitions $\lambda$ that actualy occur in the sum is at most the number of conjugacy classes of $G_m$; the number of partitions $\mu$ that actually occur is bounded by $\parts_{2^{2m}}(q)\le\parts(q)$. 

In GAP, we defined $G_m$ by converting  $\SP_{2m}(2)$ to a permutation group on $M=2^{2m}$ points and adding one generator for the translation by a nonzero vector. Then we obtained representatives and sizes of conjugacy classes using a standard function and found the numbers $N(m,q)$ using \eqref{eq:N_BCFlambdamu}. Some of these numbers are shown in Table \ref{t:orbitsASp}.

\begin{remark}
Alternatively, one can use a reformulation of \eqref{eq:N_BCFlambdamu} in terms of generating functions afforded by P\'olya's Theorem. Namely, the generating function $g_m(t)=1+\sum_{q=1}^\infty N(m,q)t^q$ is given by
\[
g_m(t)=Z_m\Big(\frac{1}{1-t},\frac{1}{1-t^2},\frac{1}{1-t^3},\ldots\Big)
\]
where $Z_m(x_1,x_2,x_3,\ldots)$ is the {\em cycle index} of $G_m$, i.e., the sum of the terms 
\[
\frac{c(\lambda)}{|G_m|}x_1^{k_1}\cdots x_M^{k_M}
\] 
where $k_i$ is the number of parts of $\lambda\vdash M$ that are equal to $i$. 
The numbers $N(m,q)$ can be obtained using a computer algebra system to expand $g_m(t)$ into a power series at $t=0$.
\end{remark}

\begin{table}[h]
\begin{tabular}{|c|c|c|c|c|c|c|c|c|c|c|c|c|}
\hline
$q$ & 1 & 2 & 3 & 4 & 5 & 6 & 7 & 8 & 9 & 10 & 11 & 12\\
\hline
$N(1,q)$ & 1 & 2 & 3 & 5 & 6 & 9 & 11 & 15 & 18 & 23 & 27 & 34\\
\hline
$N(2,q)$ & 1 & 2 & 4 & 9 & 17 & 38 & 74 &158 & 318 & 657 & 1304 & 2612\\
\hline
$N(3,q)$ & 1 & 2 & 4 & 10 & 22 & 67 & 202 & 755 & 3082 & 14493 & 72584 & 379501\\
\hline
$N(4,q)$ & 1 & 2 & 4 & 10 & 23 & 75 & 265 & 1352 & 9432 & 98773 & 1398351 & 23613147\\
\hline
$N(5,q)$ & 1 & 2 & 4 & 10 & 23 & 76 & 275 & 1495 & 12196 & 183053 & 5075226 & 226160064\\
\hline
\end{tabular}

\medskip

\caption{Number of orbits, $N(m,q)$, of $\ASP_{2m}(2)$ on the set of multisets of size $q$ in $T=\ZZ_2^{2m}$.}
\label{t:orbitsASp}
\end{table}

\subsection{Counting gradings}

The number of fine gradings of Type I, $\NAI(r)$, is given by equation \eqref{eq:num_grad_AI}, see also Table \ref{t:gradings_M}. As to Type II gradings, we will use the following notation. For $n=2^\alpha k$ where $k$ is odd, set
\begin{equation}\label{eq:f_A}
f_0(n)=\sum_{m=0}^\alpha\sum_{s=0}^{\lfloor 2^{\alpha-m-1}k\rfloor}N(m,2^{\alpha-m}k-2s),
\end{equation}
with the convention $N(0,q)=1$ for all $q$. Then Theorem \ref{A_fine} implies that the number of fine gradings of Type II, $\NAII(r)$, is given by
\begin{equation}\label{eq:num_grad_AII}
\NAII(r)=\left\{\begin{array}{ll}
                f_0(r+1)-1 &\mbox{if $r+1$ is a power of 2},\\
                f_0(r+1)   &\mbox{otherwise}.
                \end{array}\right.
\end{equation}
The total number of fine gradings on $A_r$ is, of course, $\NA(r)=\NAI(r)+\NAII(r)$. We calculated these numbers for $r\le 100$ assuming $\chr{\FF}=0$. The results are stated in Table \ref{t:gradings_A}. Note that if $r$ is even then $\NAII(r)=\frac{r}{2}+1$.

\begin{remark}
If $\chr{\FF}=p>0$ ($p\ne 2$), then the numbers $\NAI(r)$ must be adjusted to exclude the prime factor $p$ when it divides $r+1$. The case $A_2$ requires special treatment if $\chr{\FF}=3$; the number of fine gradings turns out to be $2$ instead of $3$.
\end{remark}

\begin{table}[h]
\begin{tabular}{ccc}
\begin{tabular}{|c|c|c|c|}
\hline
$r$ & (I) & (II) & $\NA(r)$ \\
\hline
2 & 2 & 2 & 4\\
\hline
3 & 2 & 6 & 8\\
\hline
4 & 2 & 3 & 5\\
\hline
5 & 4 & 8 & 12\\
\hline
6 & 2 & 4 & 6\\
\hline
7 & 5 & 16 & 21\\
\hline
8 & 4 & 5 & 9\\
\hline
9 & 4 & 16 & 20\\
\hline
10 & 2 & 6 & 8\\
\hline
11 & 8 & 29 & 37\\
\hline
12 & 2 & 7 & 9\\
\hline
13 & 4 & 29 & 33\\
\hline
14 & 4 & 8 & 12\\
\hline
15 & 10 & 56 & 66\\
\hline
16 & 2 & 9 & 11\\
\hline
17 & 8 & 49 & 57\\
\hline
18 & 2 & 10 & 12\\
\hline
19 & 8 & 88 & 96\\
\hline
20 & 4 & 11 & 15\\
\hline
21 & 4 & 78 & 82\\
\hline
22 & 2 & 12 & 14\\
\hline
23 & 14 & 157 & 171\\
\hline
24 & 4 & 13 & 17\\
\hline
25 & 4 & 119 & 123\\
\hline
26 & 7 & 14 & 21\\
\hline
27 & 8 & 247 & 255\\
\hline
28 & 2 & 15 & 17\\
\hline
29 & 8 & 175 & 183\\
\hline
30 & 2 & 16 & 18\\
\hline
31 & 17 & 441 & 458\\
\hline
32 & 4 & 17 & 21\\
\hline
33 & 4 & 249 & 253\\
\hline
34 & 4 & 18 & 22\\
\hline
\end{tabular}
&
\begin{tabular}{|c|c|c|c|}
\hline
$r$ & (I) & (II) & $\NA(r)$ \\
\hline
35 & 16 & 717 & 733\\
\hline
36 & 2 & 19 & 21\\
\hline
37 & 4 & 345 & 349\\
\hline
38 & 4 & 20 & 24\\
\hline
39 & 14 & 1305 & 1319\\
\hline
40 & 2 & 21 & 23\\
\hline
41 & 8 & 467 & 475\\
\hline
42 & 2 & 22 & 24\\
\hline
43 & 8 & 2269 & 2277\\
\hline
44 & 8 & 23 & 31\\
\hline
45 & 4 & 619 & 623\\
\hline
46 & 2 & 24 & 26\\
\hline
47 & 24 & 4284 & 4308\\
\hline
48 & 4 & 25 & 29\\
\hline
49 & 8 & 806 & 814\\
\hline
50 & 4 & 26 & 30\\
\hline
51 & 8 & 7700 & 7708\\
\hline
52 & 2 & 27 & 29\\
\hline
53 & 14 & 1033 & 1047\\
\hline
54 & 4 & 28 & 32\\
\hline
55 & 14 & 14592 & 14606\\
\hline
56 & 4 & 29 & 33\\
\hline
57 & 4 & 1305 & 1309\\
\hline
58 & 2 & 30 & 32\\
\hline
59 & 16 & 26426 & 26442\\
\hline
60 & 2 & 31 & 33\\
\hline
61 & 4 & 1628 & 1632\\
\hline
62 & 8 & 32 & 40\\
\hline
63 & 28 & 49420 & 49448\\
\hline
64 & 4 & 33 & 37\\
\hline
65 & 8 & 2008 & 2016\\
\hline
66 & 2 & 34 & 36\\
\hline
67 & 8 & 87728 & 87736\\
\hline
\end{tabular}
&
\begin{tabular}{|c|c|c|c|}
\hline
$r$ & (I) & (II) & $\NA(r)$ \\
\hline
68 & 4 & 35 & 39\\
\hline
69 & 8 & 2451 & 2459\\
\hline
70 & 2 & 36 & 38\\
\hline
71 & 28 & 160306 & 160334\\
\hline
72 & 2 & 37 & 39\\
\hline
73 & 4 & 2964 & 2968\\
\hline
74 & 8 & 38 & 46\\
\hline
75 & 8 & 275919 & 275927\\
\hline
76 & 4 & 39 & 43\\
\hline
77 & 8 & 3554 & 3562\\
\hline
78 & 2 & 40 & 42\\
\hline
79 & 24 & 494159 & 494183\\
\hline
80 & 12 & 41 & 53\\
\hline
81 & 4 & 4228 & 4232\\
\hline
82 & 2 & 42 & 44\\
\hline
83 & 16 & 816756 & 816772\\
\hline
84 & 4 & 43 & 47\\
\hline
85 & 4 & 4994 & 4998\\
\hline
86 & 4 & 44 & 48\\
\hline
87 & 14 & 1450304 & 1450318\\
\hline
88 & 2 & 45 & 47\\
\hline
89 & 16 & 5860 & 5876\\
\hline
90 & 4 & 46 & 50\\
\hline
91 & 8 & 2276709 & 2276717\\
\hline
92 & 4 & 47 & 51\\
\hline
93 & 4 & 6834 & 6838\\
\hline
94 & 4 & 48 & 52\\
\hline
95 & 38 & 4116511 & 4116549\\
\hline
96 & 2 & 49 & 51\\
\hline
97 & 8 & 7925 & 7933\\
\hline
98 & 8 & 50 & 58\\
\hline
99 & 16 & 5997150 & 5997166\\
\hline
100 & 2 & 51 & 53\\
\hline
\end{tabular}
\end{tabular}

\medskip

\caption{Number of fine gradings, $\NA(r)$, on the simple Lie algebra of type $A_r$ assuming $\chr{\FF}=0$.}
\label{t:gradings_A}
\end{table}

\subsection{Asymptotic behaviour}

The function $\NA(r)$ behaves irregularly, so we use averaging similar to \eqref{defNMA}. We record for future reference:
\begin{equation}\label{defNLieA}
 \NLieA(r)=\frac{1}{r}\sum_{j\le r} \NLie(j)\quad\mbox{where}\quad X\in\{A,C,D\}.
\end{equation}
This is the average number of fine gradings on the simple Lie algebras of type $X_j$ with $j\leq r$. (The summation can start with $j=1$ if $X=A$ or $C$ and with $j=3$ if $X=D$, but such details are irrelevant for the asymptotics.)

\begin{theorem}\label{thm:asymgradings_A}
Let $\FF$ be an algebraically closed field, $\chr{\FF}\ne 2$. Let $\NAA(r)$ be defined by \eqref{defNLieA}. The following asymptotic formula holds:
\[
\ln \NAA(r)=b(r+1) r^{2/3} + O((\ln r)^2),
\]
where $b(\cdot)$ is the bounded continuous function defined by \eqref{def_beta}. Specifically, the maximum and minimum values of $b(\cdot)$ are the constants $b_0\approx 1.581080$ and $b_1\approx 1.512173$ defined in Lemma {\rm\ref{lem:sol_opt_prob}} and Equation \eqref{def_b1}.
\end{theorem}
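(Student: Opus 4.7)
Since $\NA(r)=\NAI(r)+\NAII(r)$ and $\NAI(r)=O(\ln r)$ by \eqref{eq:num_grad_AI} and \eqref{eq:num_grad_M_2}, the cumulative Type~I contribution satisfies $\sum_{j\le r}\NAI(j)=O(r\ln r)$, which is dominated by $\exp(b_1 r^{2/3})$ and absorbed into the $O((\ln r)^2)$ error. I therefore concentrate on estimating $\sum_{j\le r}\NAII(j)=\sum_{j\le r}f_0(j+1)+O(r)$, with $f_0$ as in \eqref{eq:f_A}.

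The technical core is a pair of estimates for $N(m,q)$. Combining \eqref{eq:triv_bounds} and \eqref{eq:lower_bound_G} with Stirling's formula, and using $\ln|\ASP_{2m}(2)|=(\ln 2)m^2+O(m)$ for $m\le\log_2 n$, I obtain
\[
\ln N(m,q)=(q+M)\ln(q+M)-q\ln q-M\ln M+O((\ln n)^2),\qquad M=2^{2m},
\]
uniformly for $0\le m\le\log_2 n$ and $1\le q\le n$. Since $f_0(n)$ is a sum of at most $O(n\log n)$ such terms, $\ln f_0(n)$ equals the log of its largest summand up to $O(\ln n)$. The right-hand side above is increasing in $q$ for fixed $m$, so the inner maximum is attained at $q=n/2^m$ (i.e.\ $s=0$), reducing the problem to a single-parameter optimization over $m$ with $0\le m\le v_2(n)$.

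The substitution $q=x\,n^{2/3}$, $M=y\,n^{2/3}$ makes the leading term equal to $n^{2/3}\,\Phi(x,y)$ with $\Phi(x,y)=(x+y)\ln(x+y)-x\ln x-y\ln y$; the $\ln n^{2/3}$ contributions cancel identically because $(x+y)-x-y=0$. Along the constraint $x\sqrt{y}=1$ coming from $q\cdot 2^m=n$, the function $y\mapsto\Phi(1/\sqrt{y},y)$ has a single continuous maximum equal to $b_0$ (this is Lemma~\ref{lem:sol_opt_prob}). The admissible values of $y$ form the discrete set $\{2^{2m}/n^{2/3}:0\le m\le v_2(n)\}$, so the attained maximum $b(n)$ lies in $[b_1,b_0]$, where $b_1$ is the worst-case continuous value over the coarsest-possible positions of powers of $4$ relative to $n^{2/3}$. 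The definition \eqref{def_beta} encodes this discrete optimization, and \eqref{def_b1} identifies $b_1$.

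It remains to pass from $\ln\NAII(j)$ to $\ln\NAA(r)$. Since $\NAII(j)$ grows like $\exp(cj^{2/3})$ along favourable subsequences while being only polynomial in $j$ otherwise, the sum $\sum_{j\le r}\NAII(j)$ is dominated by its maximum term up to a factor of $r$, contributing only $O(\ln r)$ to the logarithm. The optimal $j\le r$ is obtained by a joint discrete optimization over $j$ and the admissible $m$; rescaling by the ratio $(j+1)^{2/3}/r^{2/3}$ turns this into the maximization whose value is $b(r+1)\,r^{2/3}$. The main obstacle is precisely this final step: verifying that, uniformly in $r$, the joint optimum is captured by the specific function $b(r+1)$ with the announced extreme values, and that the rounding losses from the various discrete constraints (the integer $m$, the parity of $q$, and the choice of $j$ near $r$) all collapse into the single error $O((\ln r)^2)$.
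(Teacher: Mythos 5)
Your outline tracks the paper's own route quite closely: reduce to Type II (Type I is $O(\ln n)$ per term by Theorem~\ref{thm:asymgradings_M}, hence negligible), replace $\ln\hat f_0$ by $\ln$ of the largest summand $N(m,q)$ up to $O(\ln n)$, sandwich $N(m,q)$ between $B(m,q)/|G_m|$ and $B(m,q)$ using \eqref{eq:triv_bounds} and \eqref{eq:lower_bound_G}, apply Stirling, rescale by $n^{2/3}$, and optimize $u(x,y)$ on the curve $x^2y=1$ subject to $x^{-1}n^{1/3}$ being a power of $2$. That is precisely the skeleton of the paper's Theorem~\ref{thm:asymgradings_Aii} and Lemma~\ref{lem:bincoef_constrained}.

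However, you stop short of a proof at exactly the step you flag as ``the main obstacle,'' and that step is not a formality: it is the entire content of Lemma~\ref{lem:bincoef_constrained} and of the apparatus $\phi$, $\lambda$, $\tilde v$ in \eqref{def_phi}--\eqref{def_beta}. Concretely, what you need and do not supply is: (i) a clean reduction of the joint optimization over $j\le r$, over $0\le m\le v_2(j+1)$, and over admissible $q$ of the correct parity, to the single unconstrained problem $\max\{B(m,q):2^mq\le r+1\}$ --- the paper achieves this by noting that every $(m,q)$ with $2^mq\le t$ occurs as a summand of some $f_0(j)$ with $j\le t$, so the divisibility and parity constraints disappear after taking $\max_j$; (ii) the verification that the nearest admissible $x=x_0\lambda(t)$ or $x_0\lambda(t)/2$ yields $\tilde v(x_0\lambda(t))\,t^{2/3}=b(t)\,t^{2/3}$ with all rounding losses (integer $q$, $\binom{q+M-1}{q}$ vs.\ $\binom{q+M}{q}$, Stirling remainders) collapsed into $O(\ln t)$; and (iii) the sharp two-sided structure of the error, where the $-\logsquaredterm$ lower slack comes solely from $\ln|G_{m^*}|$ with $m^*\le\tfrac13\log_2 n+O(1)$. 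On this last point you also slip numerically: $\ln|\ASP_{2m}(2)|=2(\ln 2)m^2+O(m)$, not $(\ln 2)m^2+O(m)$; harmless for an $O((\ln r)^2)$ claim but it would give the wrong constant in the paper's explicit lower error term. Your phrase ``rescaling by the ratio $(j+1)^{2/3}/r^{2/3}$ turns this into the maximization whose value is $b(r+1)r^{2/3}$'' gestures at (i)--(ii) but would need the argument just sketched to be made rigorous. In short, the approach is the right one, but the proposal defers rather than proves the lemma that carries the theorem.
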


\begin{proof}
The number of fine gradings of Type II is given by \eqref{eq:num_grad_AII}, and
Theorem \ref{thm:asymgradings_Aii} implies that the logarithm of the average
number is $b(r+1) r^{2/3} + O((\ln r)^2)$. The number of fine gradings of Type I is
given by \eqref{eq:num_grad_AI}, and Theorem \ref{thm:asymgradings_M} implies
that the average number is asymptotically negligible compared to Type II. The
result follows.
\end{proof}


\section{Lie Algebras of Series $C$}
\label{sec:C}

For Series $C$ and $D$, the classification of fine gradings (see e.g. \cite[\S 3.5, 3.6]{EKmon}) requires a certain action of $\SP_{2m}(2)$ on $T=\ZZ_2^{2m}$, which is different from the natural action (cf. \cite[p.245]{DiMo}). The group $\SP_{2m}(2)=\SP(T)$ is defined as the group of isometries of a nondegenerate alternating bilinear form on $T$, say, the following:
\begin{equation}\label{eq:bilin_form}
(x,y)=\sum_{i=1}^m x_i y_{2m+1-i}-\sum_{i=1}^m x_{2m+1-i} y_i=\sum_{i=1}^{2m} x_i y_{2m+1-i}\quad\mbox{for all }x,y\in\ZZ_2^{2m}.
\end{equation}
(This is the form used by GAP to define symplectic groups.) Now consider the quadratic form
\begin{equation}\label{eq:quad_form}
Q(x)=\sum_{i=1}^m x_i x_{2m+1-i}\quad\mbox{for all }x\in\ZZ_2^{2m}.
\end{equation}
Clearly, the bilinear form \eqref{eq:bilin_form} is the polar of $Q$, i.e., $(x,y)=Q(x+y)-Q(x)-Q(y)$ for all $x,y$. But since we are now in characteristic $2$, $Q$ cannot be expressed in terms of the bilinear form and, consequently, an element  $A\in\SP(T)$ does not necessarily preserve $Q$. One verifies that the mapping $x\mapsto Q(A^{-1}x)+Q(x)$ is linear (a peculiarity of the field of order $2$), so there exists unique $t_A\in T$ such that
\[
(t_A,x)=Q(A^{-1}x)+Q(x)\quad\mbox{for all }x\in T.
\]
It follows that $Q(Ax+t_A)=Q(x)+Q(t_A)$ for all $x$. Let
\[
T_+=\{x\in T\;|\;Q(x)=0\}\quad\mbox{and}\quad T_-=\{x\in T\;|\;Q(x)=1\}.
\]
One verifies that $|T_\pm|=2^{m-1}(2^m\pm 1)$. Since the mapping $x\mapsto Ax+t_A$ is bijective and $|T_+|\ne |T_-|$, it cannot swap $T_+$ and $T_-$, hence $Q(t_A)=0$ and $Q(Ax+t_A)=Q(x)$ for all $x$.

\begin{df}\label{df:twisted_action}
For $x\in T$ and $A\in\SP(T)$, define $A\cdot x=Ax+t_A$. One verifies that the mapping $\SP(T)\to\ASP(T)=T\rtimes\SP(T)$, $A\mapsto (t_A,A)$, is a homomorphism, so $\cdot$ is an action of $\SP(T)$ on $T$, which we call the {\em twisted action} to distinguish from the natural one.
\end{df}

By construction, the twisted action of $\SP(T)$ preserves $Q$, i.e., $T_+$ and $T_-$ are invariant subsets. One can show that the twisted action is $2$-transitive on each of $T_+$ and $T_-$.

\begin{theorem}[\cite{E09d,EK_Weyl2}]\label{C_fine}
Let $\FF$ be an algebraically closed field, $\chr{\FF}\neq 2$. Let $n\ge
4$ be even. Then any fine grading on $\Sp_n(\FF)$ is equivalent to
$\C(T,q,s,\tau)$ where $T$ is an elementary $2$-group of even rank, $q$
and $s$ are non-negative integers, $(q+2s)\sqrt{|T|}=n$,
$\tau=(t_1,\ldots,t_q)$ is a $q$-tuple of elements of $T_-$, and $t_1\ne
t_2$ if $q=2$ and $s=0$. Moreover, $\C(T_1,q_1,s_1,\tau_1)$ and
$\C(T_2,q_2,s_2,\tau_2)$ are equivalent if and only if $T_1\cong T_2$,
$q_1=q_2$, $s_1=s_2$ and, identifying $T_1=T_2=\ZZ_2^{2m}$,
$\Sigma(\tau_1)$ is conjugate to $\Sigma(\tau_2)$ by the twisted action
of $\mathrm{Sp}_{2m}(2)$ as in Definition~{\rm\ref{df:twisted_action}}.\qed
\end{theorem}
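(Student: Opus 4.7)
The plan is to classify fine gradings on $\Sp_n(\FF)$ by pulling them back to the already-classified matrix case of Theorem \ref{matr_fine}. Realize $\Sp_n(\FF)$ as the $-1$-eigenspace of the symplectic anti-involution $\sigma$ of $M_n(\FF)$ attached to a fixed nondegenerate alternating form. A general principle for classical Lie algebras asserts that every fine grading on $\Sp_n$ extends to a fine grading on $M_n$ for which $\sigma$ is graded in the sense $\sigma(\cA_g)\subset\cA_{-g}$; the problem therefore reduces to enumerating such $\sigma$-compatible fine gradings on $M_n$ up to $\sigma$-preserving equivalence.

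By Theorem \ref{matr_fine} the underlying grading must be some $\M(T,k)$. Compatibility of $\sigma$ with the Pauli tensor factor $M_{\sqrt{|T|}}$ forces $2g=0$ for every $g\in T$, so $T$ must be an elementary abelian $2$-group $\ZZ_2^{2m}$. Writing $M_n=M_k\otimes M_{2^m}$ accordingly, $\sigma$ splits as a tensor product of compatible anti-involutions on the two factors. On the Pauli-graded factor, a direct computation with the matrix units $X_t$ shows that compatible anti-involutions are classified up to scalar by the quadratic form $Q$ of \eqref{eq:quad_form}, and those producing a symplectic (rather than orthogonal) form are indexed by elements of $T_-$. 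On the Cartan-graded factor $M_k$, the anti-involution permutes the $k$ homogeneous lines with $q$ fixed points and $s$ transposed pairs, $q+2s=k$; at each fixed point one must further specify which local symplectic involution governs the corresponding $M_{2^m}$ factor, producing the tuple $\tau\in T_-^q$. The side condition $t_1\ne t_2$ when $q=2,\,s=0$ rules out the only case in which the resulting grading admits a proper refinement.

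The last and most delicate step, which I expect to be the main obstacle, is identifying the equivalence relation on the triples $(q,s,\tau)$. The Weyl group of $\M(T,k)$ is $\SP_{2m}(2)\times S_k$ acting on the label set $T\times\{1,\ldots,k\}$, but a generic $A\in\SP_{2m}(2)$ does \emph{not} preserve $\sigma$ because it need not preserve $Q$. The remedy, forced on us by the $\sigma$-compatibility constraint, is to compose $A$ with the inner automorphism given by conjugation by the matrix unit $X_{t_A}$, where $t_A$ is the unique element of $T$ satisfying $(t_A,x)=Q(A^{-1}x)+Q(x)$. Checking that $A\mapsto(t_A,A)$ is a group homomorphism into $\ASP(T)$ shows that the resulting action on $T$ is exactly the twisted action of Definition \ref{df:twisted_action}, and it preserves $T_-$. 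The core difficulty is verifying that this twisted action captures \emph{all} and \emph{only} the ambiguity in $\tau$: one has to show that the $S_k$ and twisted $\SP_{2m}(2)$ actions decouple on the classifying data, and that every $\sigma$-preserving isomorphism between two gradings $\C(T,q,s,\tau_1)$ and $\C(T,q,s,\tau_2)$ is accounted for by a combination of these two actions, with no further identifications imposed by the passage from $M_n$ to its $-1$-eigenspace $\Sp_n$.
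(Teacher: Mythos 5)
Theorem~\ref{C_fine} is not proved in the paper: it is imported verbatim from \cite{E09d,EK_Weyl2}, and the terminal \qed merely signals that the classification is taken as given. So there is no internal argument to compare against; your sketch has to be judged against the cited sources.

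Read that way, your outline faithfully mirrors the strategy of \cite{E09d,EK_Weyl2}: realize $\Sp_n(\FF)$ as the $-1$-eigenspace of a symplectic anti-involution $\sigma$ of $M_n(\FF)$, transfer to $\sigma$-compatible fine gradings of the matrix algebra, invoke Theorem~\ref{matr_fine}, note that the involution forces $T$ to be an elementary $2$-group, read off the $T_-$-labels on the $q$ invariant blocks, and trace the equivalence relation to the twisted $\SP_{2m}(2)$-action of Definition~\ref{df:twisted_action}. Your identification of the cocycle $A\mapsto t_A$ as the price of restoring $\sigma$-compatibility when applying $A\in\SP_{2m}(2)$ is exactly the right explanation of where the twisted action comes from. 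However, the two steps that carry essentially all of the mathematical weight are asserted rather than proved. First, the ``general principle'' that every fine grading of $\Sp_n$ extends to a $\sigma$-compatible fine grading of $M_n$ (and that this correspondence respects equivalence) is precisely the transfer theorem Elduque establishes for graded-simple associative algebras with involution and their Lie algebras of skew elements; it is not an off-the-shelf fact, and one must also argue that the resulting $\M(T,k)$ really is fine on $M_n$, not merely fine among $\sigma$-compatible gradings. Second, you yourself flag but do not resolve the claim that the twisted action, together with $S_k$, captures \emph{all and only} the identifications after descending to the eigenspace. As a road map of the cited proof your proposal is sound; as a self-contained argument it stops exactly at the points where \cite{E09d,EK_Weyl2} do the real work.
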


Hence, to calculate the number of fine gradings on simple Lie algebras of Series $C$, we need to determine the number of orbits, $N_-(m,q)$, of the twisted action of $\SP_{2m}(2)$ on multisets of size $q$ in $T_-\subset \ZZ_2^{2m}$. Similarly, we will need $N_+(m,q)$ for Series $D$. By the same argument as for \eqref{eq:triv_bounds}, we obtain the following bounds:
\begin{equation}\label{eq:triv_bounds_pm}
\parts_{2^{m-1}(2^m\pm 1)}(q)\le N_\pm(m,q)\le\binom{q+2^{m-1}(2^m\pm 1)-1}{q},
\end{equation}
where $\parts_k(q)$ is the number of partitions of $q$ into at most $k$ positive parts. Note that if $m=1$ then we have $|T_+|=3$ and $|T_-|=1$, so $\SP_2(2)$ acts as the full group of permutations on $T_+$ and on $T_-$, hence the lower bound is achieved in this case. It is also achieved if $q\le 2$ because of $2$-transitivity.

An alternative lower bound, similar to \eqref{eq:lower_bound_G}, is the following:
\begin{equation}\label{eq:lower_bound_G_pm}
\frac{1}{|G_m|}\binom{q+2^{m-1}(2^m\pm 1)-1}{q}\le N_\pm(m,q),
\end{equation}
where $G_m=\SP_{2m}(2)$. This will be used in Section~\ref{sec:asymACD} to obtain asymptotic results.

\subsection{Counting orbits}

Using the same method as in the previous section, we can compute the numbers $N_-(m,q)$. Some of them are displayed in Table \ref{t:orbitsSp_minus}. We note that if $m=2$ then $|T_-|=6$, so $\SP_4(2)$ acts as the full group of permutations on $T_-$ (but not on $T_+$). Hence the lower bound \eqref{eq:triv_bounds_pm} for $N_-(2,q)$ is achieved.

\begin{table}[h]
\begin{tabular}{|c|c|c|c|c|c|c|c|c|c|c|c|c|}
\hline
$q$ & 1 & 2 & 3 & 4 & 5 & 6 & 7 & 8 & 9 & 10 & 11 & 12 \\
\hline
$N_-(1,q)$ & 1 & 1 & 1 & 1 & 1 & 1 & 1 & 1 & 1 & 1 & 1 & 1\\
\hline
$N_-(2,q)$ & 1 & 2 & 3 & 5 & 7 & 11 & 14 & 20& 26 & 35 & 44 & 58 \\
\hline
$N_-(3,q)$ & 1 & 2 & 4 & 8 & 16 & 37 & 80 & 186 & 444 & 1091 & 2711 & 6857\\
\hline
$N_-(4,q)$ & 1 & 2 & 4 & 9 & 20 & 57 & 172 & 660 & 3093 & 18413 & 131556 & 1059916\\
\hline
$N_-(5,q)$ & 1 & 2 & 4 & 9 & 21 & 63 & 210 & 986 & 6773 & 77279 & 1432570 & 36967692\\
\hline
$N_-(6,q)$ &1 & 2 & 4 & 9 & 21 & 64 & 217 & 1058 & 7898 & 110027 & 3156144 & 172638169\\
\hline
\end{tabular}


\medskip

\caption{Number of orbits, $N_-(m,q)$, of $\SP_{2m}(2)$ on the set of multisets of size $q$ in $T_-\subset\ZZ_2^{2m}$.}
\label{t:orbitsSp_minus}
\end{table}

\subsection{Counting gradings}

We will use the following notation. For $n=2^\alpha k$ where $k$ is odd, set
\begin{equation}\label{eq:f_plus_minus}
f_\pm(n)=\sum_{m=0}^\alpha\sum_{s=0}^{\lfloor 2^{\alpha-m-1}k\rfloor}N_\pm(m,2^{\alpha-m}k-2s),
\end{equation}
with the convention $N_+(0,q)=1$ for all $q$ and $N_-(0,q)=\delta_{0,q}$ (Kronecker delta). Since, for Series $C$, the rank $r$ is related to the matrix size $n$ as $n=2r$, Theorem \ref{C_fine} implies that the number of fine gradings, $\NC(r)$, on the simple Lie algebra $C_r$ ($r\ge 2$) is given by
\begin{equation}\label{eq:num_grad_C}
\NC(r)=\left\{\begin{array}{ll}
                f_-(2r)-1 &\mbox{if $r$ is a power of 2},\\
                f_-(2r)   &\mbox{otherwise}.
                \end{array}\right.
\end{equation}
We calculated these numbers for $r\le 100$. The results are stated in Table \ref{t:gradings_C}, where we included the case $C_1=A_1$ for completeness. Note that if $r$ is odd, then \eqref{eq:f_plus_minus} involves only $m=0$ and $m=1$, hence $\NC(r)=\lfloor r/2 \rfloor+2$.

\begin{table}[h]
\begin{tabular}{ccccc}
\begin{tabular}{|c|c|}
\hline
$r$ & $\NC(r)$\\
\hline
1 & 2\\
\hline
2 & 3\\
\hline
3 & 3\\
\hline
4 & 7\\
\hline
5 & 4\\
\hline
6 & 9\\
\hline
7 & 5\\
\hline
8 & 17\\
\hline
9 & 6\\
\hline
10 & 18\\
\hline
11 & 7\\
\hline
12 & 32\\
\hline
13 & 8\\
\hline
14 & 34\\
\hline
15 & 9\\
\hline
16 & 63\\
\hline
17 & 10\\
\hline
18 & 62\\
\hline
19 & 11\\
\hline
20 & 107\\
\hline
\end{tabular}
&
\begin{tabular}{|c|c|}
\hline
$r$ & $\NC(r)$\\
\hline
21 & 12\\
\hline
22 & 108\\
\hline
23 & 13\\
\hline
24 & 199\\
\hline
25 & 14\\
\hline
26 & 181\\
\hline
27 & 15\\
\hline
28 & 339\\
\hline
29 & 16\\
\hline
30 & 293\\
\hline
31 & 17\\
\hline
32 & 625\\
\hline
33 & 18\\
\hline
34 & 458\\
\hline
35 & 19\\
\hline
36 & 1122\\
\hline
37 & 20\\
\hline
38 & 695\\
\hline
39 & 21\\
\hline
40 & 2211\\
\hline
\end{tabular}
&
\begin{tabular}{|c|c|}
\hline
$r$ & $\NC(r)$\\
\hline
41 & 22\\
\hline
42 & 1028\\
\hline
43 & 23\\
\hline
44 & 4510\\
\hline
45 & 24\\
\hline
46 & 1484\\
\hline
47 & 25\\
\hline
48 & 10044\\
\hline
49 & 26\\
\hline
50 & 2098\\
\hline
51 & 27\\
\hline
52 & 23038\\
\hline
53 & 28\\
\hline
54 & 2911\\
\hline
55 & 29\\
\hline
56 & 55266\\
\hline
57 & 30\\
\hline
58 & 3970\\
\hline
59 & 31\\
\hline
60 & 133241\\
\hline
\end{tabular}
&
\begin{tabular}{|c|c|}
\hline
$r$ & $\NC(r)$\\
\hline
61 & 32\\
\hline
62 & 5332\\
\hline
63 & 33\\
\hline
64 & 323502\\
\hline
65 & 34\\
\hline
66 & 7063\\
\hline
67 & 35\\
\hline
68 & 774947\\
\hline
69 & 36\\
\hline
70 & 9237\\
\hline
71 & 37\\
\hline
72 & 1838997\\
\hline
73 & 38\\
\hline
74 & 11941\\
\hline
75 & 39\\
\hline
76 & 4274302\\
\hline
77 & 40\\
\hline
78 & 15274\\
\hline
79 & 41\\
\hline
80 & 9788777\\
\hline
\end{tabular}
&
\begin{tabular}{|c|c|}
\hline
$r$ & $\NC(r)$\\
\hline
81 & 42\\
\hline
82 & 19346\\
\hline
83 & 43\\
\hline
84 & 21899478\\
\hline
85 & 44\\
\hline
86 & 24283\\
\hline
87 & 45\\
\hline
88 & 48274977\\
\hline
89 & 46\\
\hline
90 & 30227\\
\hline
91 & 47\\
\hline
92 & 103789470\\
\hline
93 & 48\\
\hline
94 & 37333\\
\hline
95 & 49\\
\hline
96 & 220645585\\
\hline
97 & 50\\
\hline
98 & 45777\\
\hline
99 & 51\\
\hline
100 & 456000882\\
\hline
\end{tabular}
\end{tabular}

\medskip

\caption{Number of fine gradings, $\NC(r)$, on the simple Lie algebra of type $C_r$ assuming $\chr{\FF}\ne 2$.}
\label{t:gradings_C}
\end{table}

\subsection{Asymptotic behaviour}

The following is an immediate consequence of \eqref{eq:num_grad_C} and Theorem
\ref{thm:asymgradings_CD}:

\begin{theorem}\label{thm:asymgradings_C}
Let $\FF$ be an algebraically closed field, $\chr{\FF}\ne 2$. Let $\NCA(r)$ be defined by \eqref{defNLieA}. The following asymptotic formula holds:
\[
\ln \NCA(r)=2^{1/3}b_-(2r) r^{2/3} + O((\ln r)^2),
\]
where $b_-(\cdot)$ is the bounded continuous function defined by \eqref{def_betapm} and described in Lemma {\rm \ref{lem:prop_bpm}}. In particular, $b_-(t)=b(t)+O(t^{-1/3})$ where $b(\cdot)$ is the function in Theorem {\rm\ref{thm:asymgradings_A}}.\qed
\end{theorem}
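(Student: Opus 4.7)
The plan is to derive Theorem \ref{thm:asymgradings_C} as a short consequence of the exact counting formula \eqref{eq:num_grad_C} and Theorem \ref{thm:asymgradings_CD}, which supplies the asymptotic behaviour of the Cesaro average of $f_-(2r)$. The only real work is to discard the sparse ``power of $2$'' correction in \eqref{eq:num_grad_C} and to propagate error terms through the logarithm.

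First I would substitute \eqref{eq:num_grad_C} into the definition \eqref{defNLieA} of $\NCA(r)$. The formula gives $\NC(r)=f_-(2r)$ except when $r$ is a power of $2$, in which case $\NC(r)=f_-(2r)-1$. Since the number of powers of $2$ in $\{1,\dots,r\}$ is $O(\ln r)$ and each correction changes the value by at most $1$, we obtain
\[
r\NCA(r) \;=\; \sum_{j=1}^{r} f_-(2j) \;+\; O(\ln r).
\]
Next I would invoke Theorem \ref{thm:asymgradings_CD} in the ``$-$'' case to get
\[
\ln\Bigl(\tfrac{1}{r}\sum_{j=1}^{r} f_-(2j)\Bigr) \;=\; 2^{1/3}\, b_-(2r)\, r^{2/3} + O((\ln r)^2).
\]
Since the leading term above grows like $\exp(c\,r^{2/3})$, the additive $O(\ln r)$ discrepancy from the previous display is swallowed by the $O((\ln r)^2)$ error after taking logarithms, yielding the first formula of the theorem.

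For the secondary claim $b_-(t)=b(t)+O(t^{-1/3})$, I would simply quote the comparison between the leading-constant functions for Series $A$ and the ``$-$'' branch relevant to Series $C$ recorded in Lemma \ref{lem:prop_bpm}; no additional argument is needed at this point.

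The only potential obstacle at this level is verifying that the ``power of $2$'' correction is genuinely negligible, which is immediate because the Cesaro average is compared with an object of doubly-exponential size in $r^{2/3}$. All the genuine analytic difficulty --- the asymptotic analysis of the binomial-coefficient bounds from \eqref{eq:triv_bounds_pm}--\eqref{eq:lower_bound_G_pm} and the optimization that defines $b_-$ --- is packaged inside Theorem \ref{thm:asymgradings_CD} and Lemma \ref{lem:prop_bpm}, so the present theorem requires no further work beyond the bookkeeping sketched above.
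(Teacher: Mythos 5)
Your proposal is correct and is exactly the bookkeeping that the paper calls an ``immediate consequence'' of \eqref{eq:num_grad_C} and Theorem \ref{thm:asymgradings_CD}: the substitution $n=2r$ in $\ln\hat f_-(n)=2^{-1/3}b_-(n)n^{2/3}+O((\ln n)^2)$ produces the factor $2^{-1/3}\cdot 2^{2/3}=2^{1/3}$, and the $O(\ln r)$ correction from the powers of $2$ is negligible against the super-polynomial size of $\hat f_-(2r)$. The secondary claim is Lemma \ref{lem:prop_bpm}(i), so nothing further is needed; your route matches the paper's.
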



\section{Lie Algebras of Series $D$}
\label{sec:D}

This case is very similar to Series $C$, which was described in the previous section.

\begin{theorem}[\cite{E09d,EK_Weyl2}]\label{D_fine}
Let $\FF$ be an algebraically closed field, $\chr{\FF}\neq 2$. Let $n\ge
6$ be even. Assume $n\ne 8$. Then any fine grading on $\So_n(\FF)$ is
equivalent to $\D(T,q,s,\tau)$ where $T$ is an elementary $2$-group of
even rank, $q$ and $s$ are non-negative integers, $(q+2s)\sqrt{|T|}=n$,
$\tau=(t_1,\ldots,t_q)$ is a $q$-tuple of elements of $T_+$, and $t_1\ne
t_2$ if $q=2$ and $s=0$. Moreover, $\D(T_1,q_1,s_1,\tau_1)$ and
$\D(T_2,q_2,s_2,\tau_2)$ are equivalent if and only if $T_1\cong T_2$,
$q_1=q_2$, $s_1=s_2$ and, identifying $T_1=T_2=\ZZ_2^{2m}$,
$\Sigma(\tau_1)$ is conjugate to $\Sigma(\tau_2)$ by the twisted action
of $\mathrm{Sp}_{2m}(2)$ as in Definition~{\rm\ref{df:twisted_action}}.\qed
\end{theorem}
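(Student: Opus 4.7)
The plan is to adapt the strategy used for Series $C$ (Theorem \ref{C_fine}), passing from a grading on $\So_n(\FF)$ to a grading on the matrix envelope $M_n(\FF)$ together with a compatible orthogonal involution, and then leveraging Theorem \ref{matr_fine}. Fix a nondegenerate symmetric bilinear form $B$ on $\FF^n$ and let $\sigma$ be the associated involution on $M_n(\FF)$, so that $\So_n(\FF)$ is the $(-1)$-eigenspace of $\sigma$. The first step is to show that any abelian-group grading $\Gamma$ on $\So_n(\FF)$ extends uniquely to a grading on $M_n(\FF)$ in which $\sigma$ is grading-preserving in the fiberwise sense $\sigma(\cA_g) = \cA_g$; this follows from $\So_n$ generating $M_n$ associatively (for $n\ge 3$), together with a standard argument showing that the extended decomposition closes under multiplication. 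Moreover, $\Gamma$ is fine on $\So_n$ precisely when the pair $(\Gamma,\sigma)$ is fine on $M_n$ in the category of graded associative algebras with involution.

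Second, Theorem \ref{matr_fine} yields that $\Gamma$ on $M_n(\FF)$ is equivalent to $\M(T,k)$, giving a decomposition $M_n(\FF) \cong \mathcal{D}\otimes M_k(\FF)$ with $\mathcal{D}$ a graded division algebra supported on $T\cong\ZZ_2^{2m}$. The involution $\sigma$, after conjugation by an inner automorphism, decomposes as $\sigma_0 \otimes \sigma_1$ where $\sigma_0$ is a graded involution on $\mathcal{D}$ and $\sigma_1$ is an involution on $M_k(\FF)$. The structure theory of graded involutions on $\mathcal{D}$ (in characteristic $\ne 2$) shows that $\sigma_0$ is determined, up to inner graded automorphism, by a quadratic form on $T$ whose polar is the commutation form; demanding that the global $\sigma$ be orthogonal forces this quadratic form to be $Q$ of \eqref{eq:quad_form} and locates the local invariants inside $T_+$. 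The involution $\sigma_1$ on $M_k(\FF)$ acts on the set of $k$ primitive idempotents, giving $q$ fixed idempotents and $s$ transpositions, so $k=q+2s$; at each of the $q$ diagonal blocks, the local involution is a twisted version of $\sigma_0$ encoded by some $t_i \in T_+$, and the constraint $t_i\in T_+$ rather than $T_-$ is exactly what distinguishes Series $D$ from Series $C$. Fineness forces each of the $q$ blocks to be $1$-dimensional; otherwise, applying Theorem \ref{matr_fine} inside an individual block produces a proper refinement still compatible with $\sigma$.

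Third, the equivalence criterion. An isomorphism between two fine $\D$-gradings lifts to an isomorphism of the triples $(M_n,\Gamma,\sigma)$. By Skolem--Noether, graded automorphisms of $\mathcal{D}$ that preserve the quadratic form $Q$ are exactly those induced by $\mathrm{Sp}_{2m}(2)$ via the twisted action of Definition \ref{df:twisted_action}, whose very point is to lift the natural $\mathrm{Sp}$-action to one preserving $Q$. Simultaneous permutation of the $q$ diagonal blocks together with conjugation by $\mathcal{D}^\times$ translates into the twisted $\mathrm{Sp}_{2m}(2)$-action on the multiset $\Sigma(\tau)$, yielding the stated equivalence. The excluded case $q=2$, $s=0$, $t_1=t_2$ would give a grading that coarsens to $\D(T,0,1,\emptyset)$ on the same underlying $M_n$-grading, hence is not fine. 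The assumption $n\ne 8$ is needed because the triality outer automorphism of $D_4$ creates additional equivalences not realized by $\mathrm{Sp}_{2m}(2)$.

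The main obstacle is the second step: executing the structure theorem for graded orthogonal involutions on $\mathcal{D}\otimes M_k(\FF)$ sharply enough to pin down the parameters $(T,q,s,\tau)$ with $\tau \in T_+^q$ and with the uniqueness required for a classification up to equivalence. In particular, showing that the $T_+/T_-$ dichotomy inside $\mathcal{D}$ aligns precisely with the orthogonal/symplectic dichotomy of the ambient bilinear form is the technical heart of the argument and the sole structural difference between the proofs for Series $C$ and $D$.
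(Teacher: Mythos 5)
The paper does not prove Theorem~\ref{D_fine}: it is stated as a result imported from \cite{E09d,EK_Weyl2} with an immediate \qed, and the paper's own contribution is the subsequent orbit counting and asymptotic analysis, not a rederivation of the classification. There is therefore no in-paper proof for your proposal to be measured against.

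As an outline of the argument in the cited sources, your sketch is on the right track: pass to a grading on $M_n(\FF)$ with a compatible orthogonal involution $\sigma$, invoke Theorem~\ref{matr_fine} to obtain a graded tensor factorization $\mathcal{D}\otimes M_k(\FF)$ with $\mathcal{D}$ a graded division algebra supported on $T$, decompose $\sigma$ across this factorization, and read off $(T,q,s,\tau)$ with $\tau\in T_+^q$ and the twisted $\SP_{2m}(2)$-action governing equivalence. That is indeed the shape of the proof in \cite{E09d,EK_Weyl2}. Two points you gesture at but should not underestimate: (i) the transfer of fineness between a grading on $\So_n(\FF)$ and a grading on the pair $(M_n(\FF),\sigma)$ is not automatic --- the correspondence fails exactly where outer automorphisms beyond the involution appear, which is why $n=8$ (triality for $D_4$) must be excluded and handled separately; and (ii) the statement that $T$ is an elementary $2$-group of even rank is not inherited directly from Theorem~\ref{matr_fine} (which allows more general $T$) --- it is forced by the requirement that $\mathcal{D}$ admit a graded involution at all, and that fact, together with the identification of the quadratic form $Q$ and the $T_+$ versus $T_-$ dichotomy distinguishing orthogonal from symplectic type, is the technical heart that your sketch correctly locates but does not yet supply.
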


As mentioned in the introduction, the Lie algebra $\So_8(\FF)$ (type $D_4$) requires special treatment.

\subsection{Counting orbits}

We can compute the numbers $N_+(m,q)$ in the same way as $N_-(m,q)$ (see Table \ref{t:orbitsSp_minus}). Some of the $N_+(m,q)$ are displayed in Table \ref{t:orbitsSp_plus}.

\begin{table}[h]
\begin{tabular}{|c|c|c|c|c|c|c|c|c|c|c|c|c|}
\hline
$q$  & 1 & 2 & 3 & 4 & 5 & 6 & 7 & 8 & 9 & 10 & 11 & 12 \\
\hline
$N_+(1,q)$ & 1 & 2 & 3 & 4 & 5 & 7 & 8 & 10 & 12 & 14 & 16 & 19\\
\hline
$N_+(2,q)$ & 1 & 2 & 4 & 8 & 14 & 27 & 46 & 82& 140 & 237 & 386 & 630 \\
\hline
$N_+(3,q)$ & 1 & 2 & 4 & 9 & 20 & 53 & 138 & 408 & 1265 & 4161 & 13999 & 47628\\
\hline
$N_+(4,q)$ & 1 & 2 & 4 & 9 & 21 & 63 & 204 & 882 & 4945 & 36909 & 337821 & 3428167\\
\hline
$N_+(5,q)$ & 1 & 2 & 4 & 9 & 21 & 64 & 217 & 1048 & 7594 & 95775 & 2061395 & 62537928\\
\hline
$N_+(6,q)$ & 1 & 2 & 4 & 9 & 21 & 64 & 218 & 1067 & 8012 & 113097 & 3362409 & 198208405\\
\hline
\end{tabular}


\medskip

\caption{Number of orbits, $N_+(m,q)$, of $\SP_{2m}(2)$ on the set of multisets of size $q$ in $T_+\subset\ZZ_2^{2m}$.}
\label{t:orbitsSp_plus}
\end{table}

\subsection{Counting gradings}

Since, for Series $D$, the rank $r$ is related to the matrix size $n$ as $n=2r$, Theorem \ref{D_fine} implies that the number of fine gradings, $\ND(r)$, on the simple Lie algebra $D_r$ ($r=3$ or $r\ge 5$) is given by
\begin{equation}\label{eq:num_grad_D}
\ND(r)=\left\{\begin{array}{ll}
                f_+(2r)-1 &\mbox{if $r$ is a power of 2},\\
                f_+(2r)   &\mbox{otherwise},
                \end{array}\right.
\end{equation}
where $f_+$ is defined by \eqref{eq:f_plus_minus}. We calculated these numbers for $r\le 100$. The results are stated in Table \ref{t:gradings_D}. For completeness, we included the case $D_4$ from \cite{E09d} (where it is assumed that $\chr{\FF}=0$), for which the number of fine gradings is $17$ instead of $15$ given by the above formula. Note that if $r$ is odd, then \eqref{eq:f_plus_minus} involves only $m=0$ and $m=1$, hence $\ND(r)=\sum_{s=0}^{\lfloor r/2 \rfloor}\parts_3(1+2s)+r+1=\sum_{s=0}^{\lfloor r/2 \rfloor}\mathrm{int}\frac{(s+2)^2}{3}+r+1$, where $\mathrm{int}\,x$ denotes the integer nearest to $x$.

\begin{table}[h]
\begin{tabular}{ccccc}
\begin{tabular}{|c|c|}
\hline
$r$ & $\ND(r)$\\
\hline
& \\
\hline
& \\
\hline
3 & 8\\
\hline
4 & 17\\
\hline
5 & 15\\
\hline
6 & 26\\
\hline
7 & 25\\
\hline
8 & 47\\
\hline
9 & 39\\
\hline
10 & 68\\
\hline
11 & 57\\
\hline
12 & 113\\
\hline
13 & 80\\
\hline
14 & 161\\
\hline
15 & 109\\
\hline
16 & 263\\
\hline
17 & 144\\
\hline
18 & 372\\
\hline
19 & 186\\
\hline
20 & 595\\
\hline
\end{tabular}
&
\begin{tabular}{|c|c|}
\hline
$r$ & $\ND(r)$\\
\hline
21 & 236\\
\hline
22 & 858\\
\hline
23 & 294\\
\hline
24 & 1387\\
\hline
25 & 361\\
\hline
26 & 1987\\
\hline
27 & 438\\
\hline
28 & 3186\\
\hline
29 & 525\\
\hline
30 & 4538\\
\hline
31 & 623\\
\hline
32 & 7292\\
\hline
33 & 733\\
\hline
34 & 10069\\
\hline
35 & 855\\
\hline
36 & 16255\\
\hline
37 & 990\\
\hline
38 & 21550\\
\hline
39 & 1139\\
\hline
40 & 35756\\
\hline
\end{tabular}
&
\begin{tabular}{|c|c|}
\hline
$r$ & $\ND(r)$\\
\hline
41 & 1302\\
\hline
42 & 44335\\
\hline
43 & 1480\\
\hline
44 & 78115\\
\hline
45 & 1674\\
\hline
46 & 87671\\
\hline
47 & 1884\\
\hline
48 & 173939\\
\hline
49 & 2111\\
\hline
50 & 166968\\
\hline
51 & 2356\\
\hline
52 & 402982\\
\hline
53 & 2619\\
\hline
54 & 307013\\
\hline
55 & 2901\\
\hline
56 & 991330\\
\hline
57 & 3203\\
\hline
58 & 546543\\
\hline
59 & 3525\\
\hline
60 & 2586241\\
\hline
\end{tabular}
&
\begin{tabular}{|c|c|}
\hline
$r$ & $\ND(r)$\\
\hline
61 & 3868\\
\hline
62 & 944552\\
\hline
63 & 4233\\
\hline
64 & 7055100\\
\hline
65 & 4620\\
\hline
66 & 1588770\\
\hline
67 & 5030\\
\hline
68 & 19667958\\
\hline
69 & 5464\\
\hline
70 & 2606954\\
\hline
71 & 5922\\
\hline
72 & 54994767\\
\hline
73 & 6405\\
\hline
74 & 4181709\\
\hline
75 & 6914\\
\hline
76 & 152123321\\
\hline
77 & 7449\\
\hline
78 & 6569548\\
\hline
79 & 8011\\
\hline
80 & 413256061\\
\hline
\end{tabular}
&
\begin{tabular}{|c|c|}
\hline
$r$ & $\ND(r)$\\
\hline
81 & 8601\\
\hline
82 & 10125234\\
\hline
83 & 9219\\
\hline
84 & 1097811150\\
\hline
85 & 9866\\
\hline
86 & 15332525\\
\hline
87 & 10543\\
\hline
88 & 2848498443\\
\hline
89 & 11250\\
\hline
90 & 22842458\\
\hline
91 & 11988\\
\hline
92 & 7213746853\\
\hline
93 & 12758\\
\hline
94 & 33520718\\
\hline
95 & 13560\\
\hline
96 & 17847717516\\
\hline
97 & 14395\\
\hline
98 & 48505808\\
\hline
99 & 15264\\
\hline
100 & 43141937237\\
\hline
\end{tabular}
\end{tabular}

\medskip

\caption{Number of fine gradings, $\ND(r)$, on the simple Lie algebra of type $D_r$ assuming $\chr{\FF}\ne 2$ ($\chr{\FF}=0$ for $r=4$).}
\label{t:gradings_D}
\end{table}

\subsection{Asymptotic behaviour}

The following is an immediate consequence of \eqref{eq:num_grad_D} and Theorem
\ref{thm:asymgradings_CD}:

\begin{theorem}\label{thm:asymgradings_D}
Let $\FF$ be an algebraically closed field, $\chr{\FF}\ne 2$. Let $\NDA(r)$ be defined by \eqref{defNLieA}. The following asymptotic formula holds:
\[
\ln \NDA(r)=2^{1/3}b_+(2r) r^{2/3} + O((\ln r)^2),
\]
where $b_+(\cdot)$ is the bounded continuous function defined by \eqref{def_betapm} and described in Lemma {\rm \ref{lem:prop_bpm}}. In particular, $b_+(t)=b(t)+O(t^{-1/3})$ where $b(\cdot)$ is the function in Theorem {\rm\ref{thm:asymgradings_A}}.\qed
\end{theorem}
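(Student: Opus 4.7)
The plan is to deduce the statement from the two ingredients cited immediately before it: the closed-form expression \eqref{eq:num_grad_D} and the general asymptotic result Theorem \ref{thm:asymgradings_CD}. First I would replace $\ND(r)$ by $f_+(2r)$ inside the average \eqref{defNLieA}. The discrepancies are the subtraction of $1$ on the sparse set of $r$ equal to powers of $2$, together with the exceptional value $\ND(4)=17$ coming from triality on $D_4$; jointly these contribute at most $O(\ln r)$ to the summatory function $\sum_{j\le r}\ND(j)$, hence an $o(1)$ additive correction after dividing by $r$. Thus
\[
\NDA(r)=\frac{1}{r}\sum_{j\le r}f_+(2j)+o(1).
\]

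Next, I would feed the right-hand side into Theorem \ref{thm:asymgradings_CD}, which is precisely formulated for the averages of $f_\pm(2\,\cdot)$ (with $f_-$ serving Theorem \ref{thm:asymgradings_C} in the parallel $C$-series argument). It produces
\[
\ln\Big(\tfrac{1}{r}\sum_{j\le r}f_+(2j)\Big)=2^{1/3}b_+(2r)\,r^{2/3}+O((\ln r)^2).
\]
Taking logarithms in the previous display and observing that the main term tends to infinity, the $o(1)$ additive correction costs at most $o(1)$ on the log scale, which is absorbed into the error $O((\ln r)^2)$. This yields the principal conclusion.

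The subsidiary claim $b_+(t)=b(t)+O(t^{-1/3})$ is Lemma \ref{lem:prop_bpm}, which I would simply quote; it reflects the fact that the optimization problem underlying $b_+$ differs from the one underlying $b$ only through $|T|=2^{2m}$ being replaced by $|T_+|=2^{m-1}(2^m+1)=\tfrac{1}{2}|T|(1+2^{-m})$, and the $2^{-m}$ perturbation of the constraint yields an $O(2^{-m})$ shift in the optimum, which in the scaling variable $t$ becomes $O(t^{-1/3})$. The main obstacle for this theorem therefore lies not in the present argument but in the preparatory Theorem \ref{thm:asymgradings_CD} and Lemma \ref{lem:prop_bpm}; the residual task here is purely bookkeeping of negligible corrections, which is presumably why the authors terminate the statement with \qed inline.
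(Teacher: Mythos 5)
Your proposal is correct and follows exactly the argument the paper intends: combining the exact formula \eqref{eq:num_grad_D} with Theorem \ref{thm:asymgradings_CD}, noting that the discrepancies ($-1$ at powers of $2$, the $D_4$ exception, the lower summation bound $j=3$) contribute only $O(\ln r)$ to the summatory function and are negligible on the log scale, and reading off the exponent bookkeeping $2^{-1/3}(2r)^{2/3}=2^{1/3}r^{2/3}$. This is precisely why the paper presents the theorem with an inline \qed and the preamble ``immediate consequence of \eqref{eq:num_grad_D} and Theorem \ref{thm:asymgradings_CD}.''
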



\section{Asymptotics of the number of fine gradings for Series $A$, $C$ and $D$}
\label{sec:asymACD}

The asymptotic formulas stated in Theorems \ref{thm:asymgradings_A},
\ref{thm:asymgradings_C} and \ref{thm:asymgradings_D} follow from similar
(rough) estimates for the functions $f_0(n)$ and $f_\pm(n)$ defined by
\eqref{eq:f_A} and \eqref{eq:f_plus_minus}, respectively. In the case of
$f_\pm(n)$, only even values of $n$ are relevant.
Define
\[
  \hat f_0(n)=\frac{1}{n}\sum_{j=1}^n f_0(j)
\]
and
\[
  \hat f_\pm(n)=\frac{1}{n/2}\sum_{2j\leq n} f_\pm(2j).
\]

Asymptotic analysis of the functions $\hat f_0$ and $\hat f_\pm$ will be
essentially based on solution of the constrained optimization problem
\begin{equation}\label{opt_prob1}
\left\{
\begin{array}{l}
u(x,y)\bydef (x+y)\ln(x+y)-x\ln x-y\ln y\,  \to\max, \\[1ex]
x>0,\; y>0,\; x^2 y=1,
\end{array}
\right.  
\end{equation}
as well as on the analysis of similar but more delicate problems (slightly different in the three cases) with certain
integrality constraints imposed on the arguments of the function $u$. The
latter problems will be dealt with in the course of the proof of
Theorems~\ref{thm:asymgradings_Aii} and \ref{thm:asymgradings_CD}. At present, let us introduce functions and
constants needed to state Theorem~\ref{thm:asymgradings_Aii}.

The problem \eqref{opt_prob1} is equivalent to maximizing the function
\begin{equation}\label{def_v}
  v(x)\bydef u(x, x^{-2}), \; x>0.
\end{equation}
The critical point equation $v'(x)=0$ can be transformed to a convenient short
form, see \eqref{eq_for_z0} below, by writing $v=x w(x^3)$, where
\[
  w(z)= z^{-1} \ln(1+z)+\ln (1+z^{-1}).
\]
Some properties of the function $u$ and the solution of 
problem~\eqref{opt_prob1} are summarized in the next lemma for reference.

\begin{lemma}\label{lem:sol_opt_prob}
(a) The function $u(x,y)$ increases in both arguments and is homogeneous of
degree 1, i.e., $u(tx,ty)=tu(x,y)$.

\smallskip
 (b) The solution of problem \eqref{opt_prob1} is
\begin{equation}\label{def_x0_y0_b}
\begin{array}{c}
  x_0={z_0}^{1/3}\approx 0.575891, \quad  y_0={z_0}^{-2/3}\approx 3.015227,\\[1ex]
  b_0\bydef u(x_0,y_0)=v(x_0)
  \approx 1.581080,
\end{array}
\end{equation}
where $z_0$ is the unique positive root of the transcendental equation
\begin{equation}\label{eq_for_z0}
   z \ln(1+z^{-1})= 2\ln(1+z).
\end{equation}\qed
\end{lemma}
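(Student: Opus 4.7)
The plan is to verify part (a) by a one-line computation and then reduce part (b) to a one-variable analysis of $v(x) = u(x, x^{-2})$. Homogeneity in (a) is immediate: $u(tx, ty) = t(x+y)[\ln t + \ln(x+y)] - tx[\ln t + \ln x] - ty[\ln t + \ln y]$, and the $\ln t$ contributions cancel since $(x+y) - x - y = 0$, leaving $t\,u(x,y)$. Monotonicity follows from $\partial u/\partial x = \ln(1+y/x) > 0$ for $y > 0$, and symmetrically in $y$.

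For (b), I first verify the factorization $v(x) = xw(x^3)$ indicated in the paper: using $x + x^{-2} = x(1+x^{-3})$ together with $\ln(1+x^{-3}) = \ln(1+x^3) - 3\ln x$, direct expansion of $u(x, x^{-2})$ produces $v(x) = x^{-2}\ln(1+x^3) + x\ln(1+x^{-3}) = xw(x^3)$. Asymptotic analysis at the endpoints shows $v(x) \to 0$ as $x \to 0^+$ and as $x \to \infty$, so since $v(1) = 2\ln 2 > 0$ the supremum is attained at an interior critical point. Differentiating $w$, the middle terms of the product rule cancel and yield the clean form $w'(z) = -z^{-2}\ln(1+z)$; then $v'(x) = w(x^3) + 3x^3 w'(x^3)$, and setting $v'(x) = 0$ with $z = x^3$ gives $\ln(1+z^{-1}) - 2z^{-1}\ln(1+z) = 0$, which after multiplying by $z$ is exactly \eqref{eq_for_z0}.

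The chief obstacle is uniqueness of the positive root of $\phi(z) \bydef z\ln(1+z^{-1}) - 2\ln(1+z)$, which I handle by two levels of differentiation. One computes $\phi'(z) = \ln(1+z^{-1}) - 3/(1+z)$ and $\phi''(z) = (2z-1)/[z(1+z)^2]$, so $\phi'$ is strictly decreasing on $(0, 1/2)$ and strictly increasing on $(1/2, \infty)$, with $\phi'(1/2) = \ln 3 - 2 < 0$, $\phi'(0^+) = +\infty$, and $\phi'(z) \to 0$ from below as $z \to \infty$ (using the expansion $\phi'(z) = -2z^{-1} + O(z^{-2})$). Hence $\phi'$ vanishes at exactly one point $z_1 \in (0, 1/2)$, so $\phi$ is strictly increasing on $(0, z_1)$ and strictly decreasing on $(z_1, \infty)$; combined with $\phi(0^+) = 0$ and $\phi(z) \to -\infty$ this forces a unique positive root $z_0 > z_1$. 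Numerical root-finding on $\phi$ then yields the value of $z_0$, and substitution into $x_0 = z_0^{1/3}$, $y_0 = x_0^{-2} = z_0^{-2/3}$, and $b_0 = v(x_0)$ produces the stated approximations.
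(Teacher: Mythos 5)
Your argument is correct and essentially carries out what the paper only sketches: the paper presents this lemma without proof (it is stated with a terminal \(\square\)), and merely hints in the surrounding text that the critical-point equation becomes tidy after the substitution \(v(x)=x\,w(x^3)\). You verify that factorization, compute \(w'(z)=-z^{-2}\ln(1+z)\) cleanly, recover \eqref{eq_for_z0} from \(v'=0\), and — the only part requiring genuine work — establish uniqueness of the positive root by showing \(\phi(z)=z\ln(1+z^{-1})-2\ln(1+z)\) satisfies \(\phi(0^+)=0\), increases to a single maximum and then decreases to \(-\infty\); the two-derivative bookkeeping checks out (\(\phi''=(2z-1)/[z(1+z)^2]\), \(\phi'(1/2)=\ln 3-2<0\), \(\phi'(0^+)=+\infty\), \(\phi'(z)\to 0^-\)). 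Part (a) is correct as well. This is a sound and complete proof that matches the approach the paper intended.
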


The second collection of constants and functions pertains to the first ``more delicate''
optimization problem mentioned above.

Consider the transcendental equation involving the function \eqref{def_v},
\begin{equation}\label{eq:doubling_in_v}
  v(x/2)=v(x).
\end{equation}
It has a unique positive root $x_1\approx 0.800203$. Define also
\begin{equation}\label{def_b1}
  b_1\bydef v(x_1/2)=v(x_1)
\approx 1.512173.
\end{equation}

Due to \eqref{eq:doubling_in_v}, the function
\begin{equation}\label{def_v1}
  \tilde v(x) \bydef\max\{v(x),v(x/2)\}\,=\,
 \left\{\begin{array}{ll}
     v(x), \; & \mbox{\rm if $x\le x_1$} \\
     v(x/2), \; & \mbox{\rm if $x> x_1$}
  \end{array}\right.
\end{equation}
is continuous in the interval $[x_0, 2x_0]$. We will use this fact in the last
set of preliminaries, which follows.

For $t\geq 1$, let $\phi(t)$ be the multiplicative excess of $t$ over
the greatest whole power of $2$ below $t$, i.e., 
\begin{equation}\label{def_phi}
 \phi(t)=\frac{t}{2^{\lfloor \log_2 t\rfloor}}.
\end{equation}
Clearly, $1\le\phi(t)<2$ and $\phi(2t)=\phi(t)$.

Next, for $t\geq x_0^{3}$, where $x_0$ is defined in \eqref{def_x0_y0_b}, let
\begin{equation}\label{def_lambda}
 \lambda(t)=\phi(x_0^{-1} t^{1/3}),
\end{equation}
%
%
and define
\begin{equation}\label{def_beta}
 b(t)={\tilde v}(x_0\lambda(t)).
 \end{equation}
The function $b(t)$ is continuous, logarithmically periodic in the sense
that $b(8t)=b(t)$, and has bounds
\begin{equation}\label{beta_bounds}
 \min b(t)= b_1\approx 1.512173, \qquad \max b(t)=b_0\approx 1.581080.
\end{equation}
The upper bound is attained when $x_0^{-1} t^{1/3}=2^m$ with integer $m$. The
lower bound is attained when $x_1^{-1} t^{1/3}=2^m$ with integer $m$. 
The function is monotone and smooth between these maximum and minimum points.

\smallskip
We are now prepared to state the theorem describing the asymptotic behaviour of $\hat f_0(n)$. 

\begin{theorem}\label{thm:asymgradings_Aii}
There exists a constant $C>0$ such that
\begin{equation*}
 - \logsquaredterm - C\ln n \le \ln \hat f_0(n)- b(n)\,
n^{2/3} \le C\ln n,
\end{equation*}
 where the function $b(\cdot)$ is defined in \eqref{def_beta}.
 It is continuous, has property $b(8t)=b(t)$, and
 its lower and upper bounds are given in
\eqref{beta_bounds}.
\end{theorem}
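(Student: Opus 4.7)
The plan is to sandwich $\ln N(m,q)$ in terms of $u(q, 2^{2m})$ with explicit error, interpret the resulting optimization via the substitution $u(j/2^m, 2^{2m}) = j^{2/3}\, v(j^{1/3}/2^m)$, and then pass to the average $\hat f_0(n)$. First I would use Stirling to obtain, uniformly in $q, M \ge 1$,
\[
\ln\binom{q+M-1}{q} = u(q, M) + O(\ln(qM)).
\]
Combined with the sandwiching bounds \eqref{eq:triv_bounds} and \eqref{eq:lower_bound_G} and the elementary estimate $\ln|G_m| \le 2m^2\ln 2 + O(m)$ (from $|G_m|=2^{2m}\,|\SP_{2m}(2)|$ and $|\SP_{2m}(2)|=2^{m^2}\prod_{i=1}^m(4^i-1)$), this yields, for $M = 2^{2m}$,
\[
u(q, 2^{2m}) - 2m^2\ln 2 - O(\ln n) \le \ln N(m, q) \le u(q, 2^{2m}) + O(\ln n).
\]

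Next I would translate the optimization to $\eta = n^{1/3}/2^m$: homogeneity of $u$ gives $u(n/2^m, 2^{2m}) = n^{2/3}\,v(\eta)$, whose continuous maximum over $\eta > 0$ is $b_0$ at $\eta = x_0$ by Lemma~\ref{lem:sol_opt_prob}. Restricting to integer $m$ and comparing the two values of $m$ nearest to $\log_2(n^{1/3}/x_0)$ produces
\[
\max_{m\in\ZZ_{\ge 0}} u(n/2^m, 2^{2m}) = n^{2/3}\,\tilde v(x_0\lambda(n)) = b(n)\,n^{2/3}.
\]
I would then prove the crucial monotonicity fact that $n \mapsto b(n)\,n^{2/3}$ is continuous and nondecreasing in $n$: on each range where the optimal integer $m$ is fixed, the function equals $u(n/2^m, 2^{2m})$, strictly increasing in $n$; the pieces glue continuously thanks to $v(x_1)=v(x_1/2)$ and the definition of $\tilde v$.

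For the upper bound, since $q \mapsto \binom{q+M-1}{q}$ is nondecreasing, the inner $s$-sum in \eqref{eq:f_A} is at most $(j/2+1)$ times its $s=0$ term; the upper estimate above combined with the displayed maximum then gives $\ln f_0(j) \le b(j)\,j^{2/3} + O(\ln j)$. Because $\hat f_0(n) \le \max_{j\le n} f_0(j)$ and $j \mapsto b(j)\,j^{2/3}$ is nondecreasing, this immediately yields $\ln\hat f_0(n) \le b(n)\,n^{2/3} + O(\ln n)$.

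For the lower bound, I would pick an integer $m^*$ attaining the discrete maximum, so $u(n/2^{m^*}, 2^{2m^*}) = b(n)\,n^{2/3}$ and $m^* = \tfrac{1}{3}\log_2 n + O(1)$, and then set $j = 2^{m^*}\lfloor n/2^{m^*}\rfloor \le n$. Then $2^{m^*}\mid j$, so the $(m=m^*, s=0)$ term $N(m^*, j/2^{m^*})$ appears in $f_0(j)$, and $|n-j| < 2^{m^*} = O(n^{1/3})$. Since $\partial_q u(q,M) = \ln(1+M/q)$ and $M/q = O(1)$ at this choice of $m^*$, one has $u(j/2^{m^*}, 2^{2m^*}) = b(n)\,n^{2/3} + O(1)$; moreover $(m^*)^2 \le \tfrac{1}{9}(\log_2 n)^2 + O(\log n)$ gives $2(m^*)^2\ln 2 \le \logsquaredterm + O(\ln n)$. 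Combining the lower bound of the first step with $\hat f_0(n) \ge f_0(j)/n$ then produces the required inequality. I expect the main obstacle to be the monotonicity of $n \mapsto b(n)\,n^{2/3}$, which is what makes the crude upper bound $\hat f_0(n) \le \max_j f_0(j)$ match the target $b(n)\,n^{2/3}$ instead of the larger $b_0\,n^{2/3}$; the careful alignment of the single-term lower bound with the divisibility condition $2^{m^*}\mid j$ is the second subtlety. All remaining estimates are routine Stirling and group-order bookkeeping.
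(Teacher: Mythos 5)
Your proposal is correct and follows essentially the same route as the paper: sandwich $N(m,q)$ between $\binom{q+2^{2m}-1}{q}/|G_m|$ and $\binom{q+2^{2m}-1}{q}$, apply Stirling, optimize $u(n/2^m,2^{2m})$ over integer $m$ near $\log_2(n^{1/3}/x_0)$ to get $b(n)\,n^{2/3}$, bound $\ln|G_{m^*}|$ via $m^*\le \tfrac13\log_2 n+O(1)$, and take $j=2^{m^*}\lfloor n/2^{m^*}\rfloor$ for the lower bound. The one organizational difference is that you isolate the monotonicity of $n\mapsto b(n)\,n^{2/3}$ as a separate step to pass from $\max_{j\le n} f_0(j)$ to $b(n)\,n^{2/3}$, whereas the paper absorbs the same fact into the inequality constraint $2^m q\le t$ in Lemma~\ref{lem:bincoef_constrained} (using that $u$ is increasing in $q$); your monotonicity claim is indeed valid, most directly because $b(n)\,n^{2/3}=\max_{m}u(n/2^m,2^{2m})$ is a supremum of functions each nondecreasing in $n$.
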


\begin{proof}
Let $f^*(n)=\max_{1\le j\le n}f_0(j)$. Clearly,
\[
 \frac{1}{n}f^*(n)\le \hat f_0(n)\le f^*(n),
\]
so $\ln \hat f_0(n)=\ln f^*(n)+O(\ln n)$. Therefore, it suffices to prove the
desired estimate for $f^*(n)$ instead of $\hat f_0(n)$. (The letter $C$ will
denote a constant that may have different values in different formulas.)

Observe that in the sum \eqref{eq:f_A} defining $f_0(n)$ the number of summands
is $O(n\ln n)$. Hence, repeating the above argument, we see that
\[
\ln f_0(n) = \ln N^*+ O(\ln n),
\]
where $N^*$ is the largest summand. Now, the summands have the form $N(m,q)$,
which are the numbers of orbits as described in Section \ref{sec:A}. From
\eqref{eq:triv_bounds} and \eqref{eq:lower_bound_G} we obtain the inequalities:
\begin{equation}\label{N_via_B}
   \frac{B(m,q)}{|G_m|}\le N(m,q)\le B(m,q),
\end{equation}
where $B(m,q)$ stands for the binomial coefficient $\binom{q+2^{2m}-1}{q}$.

The required upper bound for $N^*$, and hence for $f^*(n)$, follows from the inequality
\begin{equation}\label{max_B}
\max_{2^m q\le n} \ln B(m,q) \le b(n)\, n^{2/3} +C\ln n,
\end{equation}
which will be proved in Lemma~\ref{lem:bincoef_constrained}.

To obtain the desired lower bound for $N^*$, it suffices to show that for each
$n$ there exist $m^*$ and $q^*$ with $2^{m^*} q^*\le n$ such that the following
two inequalities hold with $C$ independent of $n$:
\begin{equation}\label{ln_order_G}
\ln |G_{m^*}| \le \logsquaredterm +C\ln n,
\end{equation}
and
\begin{equation}\label{lower_bound_B}
\ln B(m^*,q^*) \ge b(n)\, n^{2/3} -C\ln n.
\end{equation}
The inequality \eqref{lower_bound_B} will be proved in Lemma~\ref{lem:bincoef_constrained}, with $m^*$ satisfying the estimate $2^{m^*}\le Cn^{1/3}$, i.e.,
\[
  m^*\le \frac{1}{3}\log_2 n + O(1).
\]
We claim that this implies \eqref{ln_order_G}. Indeed, it is well known that
\[
|\SP_{2m}(2)|=2^{m^2}\prod_{i=1}^{m}(2^{2i}-1),
\]
hence we have
\[
|G_m|= |\ZZ_2^{2m}|\cdot|\SP_{2m}(2)|\le 2^{2m}\cdot 2^{m^2+m(m+1)}=2^{2m^2+3m}.
\]
Therefore,
\[
 \ln |G_{m^*}|\le 2(m^*)^2\ln 2 +O(m^*) \le 2\ln 2\left(\frac{\log_2 n}{3}\right)^2 +O(\ln n),
\]
as claimed.
\end{proof}

\begin{lemma}\label{lem:bincoef_constrained}
Let $B(m,q)=\binom{q+2^{2m}-1}{q}$. For $t>1$, let
\begin{equation*}
  B^*(t)=\max\{B(m,q)\;|\;m,q\in\ZZ_{\ge 0}, 2^m q\le t\}.
\end{equation*}
Then $\ln B^*(t)=b(t)\, t^{2/3} +O(\ln t)$.
Moreover, there exist $q^*$ and $m^*$ such that $\ln B(m^*,q^*)=b(t)\,t^{2/3}+O(\ln t)$ and 
$q^*\le Ct^{2/3}$, $2^{m^*}\le Ct^{1/3}$.
\end{lemma}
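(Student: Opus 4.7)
The plan is to reduce the discrete maximization to the continuous problem \eqref{opt_prob1} via Stirling, take $q=\lfloor t/2^m\rfloor$ (which is $u$-optimal for each fixed $m$), and handle the integrality of $m$ through the function $\tilde v$ of \eqref{def_v1}. All errors are controlled modulo $O(\ln t)$ in the logarithm.

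Stirling's formula yields, uniformly on the feasible set,
\[
\ln B(m,q) = u\bigl(q, 2^{2m}\bigr) + O\bigl(\ln(q+2^{2m}+1)\bigr),
\]
where $u$ is the function in \eqref{opt_prob1}; under the constraint $2^m q\le t$ this error is $O(\ln t)$. Monotonicity of $u$ in each argument (Lemma \ref{lem:sol_opt_prob}(a)) forces the optimal $q$ for each fixed $m$ to be $q_m:=\lfloor t/2^m\rfloor$; the rounding costs only $O(\ln t)$ because $\partial u/\partial q=\ln(1+2^{2m}/q)=O(\ln t)$ on the feasible set. Using the degree-one homogeneity of $u$ with scaling factor $t^{2/3}$,
\[
u\bigl(t/2^m,\,2^{2m}\bigr) = t^{2/3}\,v(\alpha_m), \qquad \alpha_m := t^{1/3}/2^m,
\]
with $v$ as in \eqref{def_v}. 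Thus, up to the $O(\ln t)$ additive error in $\ln B$, the task becomes the maximization of $v(\alpha_m)$ over $m\in\ZZ_{\ge 0}$.

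Set $m^* := \lfloor \log_2(x_0^{-1}t^{1/3})\rfloor$; then \eqref{def_phi}--\eqref{def_lambda} give $\alpha_{m^*}=x_0\lambda(t)\in[x_0,2x_0)$ and $\alpha_{m^*+1}=x_0\lambda(t)/2\in[x_0/2,x_0)$. By Lemma \ref{lem:sol_opt_prob}, $v$ is strictly unimodal with unique maximum at $x_0$, hence strictly increasing on $(0,x_0]$ and strictly decreasing on $[x_0,\infty)$. For $m\le m^*-1$ we have $\alpha_m\ge 2\alpha_{m^*}\ge 2x_0>x_0$, so $v(\alpha_m)<v(\alpha_{m^*})$ by monotonicity on $[x_0,\infty)$; for $m\ge m^*+2$ we have $\alpha_m\le \alpha_{m^*+1}/2<x_0$, so $v(\alpha_m)<v(\alpha_{m^*+1})$ by monotonicity on $(0,x_0]$. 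Consequently the maximum of $v(\alpha_m)$ over $m\ge 0$ equals $\max\{v(\alpha_{m^*}),v(\alpha_{m^*+1})\}=\tilde v(x_0\lambda(t))=b(t)$ by \eqref{def_v1} and \eqref{def_beta}. Assembling the pieces yields $\ln B^*(t)=b(t)\,t^{2/3}+O(\ln t)$. The winning pair $(m_0,q_0)$, with $m_0\in\{m^*,m^*+1\}$ and $q_0=\lfloor t/2^{m_0}\rfloor$, satisfies $2^{m_0}\le 2x_0^{-1}t^{1/3}$ and $q_0\le x_0\, t^{2/3}$, establishing the ``moreover'' clause with $C=2x_0^{-1}$.

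The most delicate step is the identification of the winning $m$; the argument above rests on strict monotonicity of $v$ on each side of its unique maximum, which is explicit from Lemma \ref{lem:sol_opt_prob}. The remaining technical points --- uniformity of Stirling on the boundary of the feasible set (very small $q$ or very small $2^{2m}$, where $\ln B(m,q)$ is in any case $O(\ln t)$) and the harmless replacement of $t/2^m$ by its integer part --- are routine and all fit inside the $O(\ln t)$ error budget.
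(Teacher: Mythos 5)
Your proof is correct and follows essentially the same route as the paper's: reduce to the continuous functional $u$ via Stirling with $O(\ln t)$ error, observe that for fixed $m$ the optimal $q$ is $\lfloor t/2^m\rfloor$, rescale by $t^{2/3}$ to arrive at $v(\alpha_m)$, and use unimodality of $v$ to restrict the optimal $m$ to the two integers nearest $\log_2(x_0^{-1}t^{1/3})$, recovering $\tilde v(x_0\lambda(t))=b(t)$. (A trivial remark: your claimed bound $q_0\le x_0 t^{2/3}$ holds only when $m_0=m^*+1$; when $m_0=m^*$ one gets $q_0\le 2x_0 t^{2/3}$, which is still of the required form $C t^{2/3}$.)
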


\begin{proof}
Set $M\bydef 2^{2m}$. Let us discard the trivial case $q=0$, which clearly does
not provide maximum. So $\ln M$ and $\ln q$ are defined and nonnegative.

The condition $q\sqrt{M}\le t$ implies $\ln q\le\ln t$ and $\ln M\le 2\ln t$.
Since $|\ln\binom{q+M-1}{q}-\ln\binom{q+M}{q}|=|\ln(q+M)-\ln M| \le C\ln t$, we may replace $B(m,q)$ by $\binom{q+M}{q}$.

By Stirling's formula,
\[
 \ln\binom{q+M}{q} = u(q,M)+\frac{1}{2}\ln\frac{q+M}{qM}+O(1),
\]
where the function $u(\cdot, \cdot)$ is defined in \eqref{opt_prob1}. Again, due to the estimates $\ln q\le\ln t$ and $\ln M\le 2\ln t$, we obtain
\[
 \ln \binom{q+M}{q} = u(q,M)+O(\ln t),
\]
so we come to the optimization problem
\[
 u(q,M)\to \max, \quad q\cdot \sqrt{M} \le t,
\]
with a strong additional restriction: $q\in\ZZ_{>0}$ and $M=2^{2m}$ where $m\in\ZZ_{\ge 0}$. Without this
restriction, as Lemma~\ref{lem:sol_opt_prob} tells us, the maximum would be
equal to $b_0 t^{2/3}$ and attained at $q_0=x_0 t^{2/3}$, $M_0=y_0 t^{2/3}$. The
idea is to get our $q^*$ and $M^*$ close to these values.

Suppose first that $M$ is fixed and our only freedom is a choice of $q$, which is a nonnegative integer. To maximize $u(q,M)$ under the constraint $qM^{1/2}\le t$, we should choose the largest possible $q$, i.e., $q=\lfloor tM^{-1/2}\rfloor$. 
With this value of $q$ we have $u(q,M)=u(tM^{-1/2}, M)+O(\ln t)$. Therefore, the integrality condition on 
$q$ can be ignored.

Now we write $q=xt^{2/3}$, where $q>0$ is no longer assumed to be integer, and $M=x^{-2}t^{2/3}$. Thus we arrive at the following simplified optimization problem:
\begin{equation}\label{opt_prob2}
\left\{
\begin{array}{l}
  v(x)=u(x,x^{-2}) \to\max,  \\[2ex]
  x>0,\; x^{-1} t^{1/3}=2^m,\; m\in\ZZ_{\ge 0}.
 \end{array}
 \right.
\end{equation}
Let $\mu=\log_2(x_0^{-1}t^{1/3})$. Since the function $v(x)$ is increasing in $(0,x_0)$ and decreasing in
$(x_0,\infty)$, the optimal value of $m$ is either $\lfloor\mu\rfloor$ or $\lceil\mu\rceil$.
Looking at \eqref{def_lambda}, we see that, in the case $m=\lfloor\mu\rfloor$,
\[
 2^m=\frac{x_0^{-1}t^{1/3}}{\lambda(t)}, \quad
 x=t^{1/3} 2^{-m}=x_0\lambda(t),
\]
while in the case $m=\lceil\mu\rceil$ ($\mu\notin\ZZ$),
\[
 2^{m-1}=\frac{x_0^{-1}t^{1/3}}{\lambda(t)}, \quad
 x=t^{1/3} 2^{-m}=\frac{1}{2}x_0\lambda(t).
\]
Denote for a moment $x(t)=x_0\lambda(t)$. 
Then we have $x=x(t)$ in the first case and $x=x(t)/2$ in the second case. Recalling the definition \eqref{def_b1} of $b_1$, we see that the inequality $v(x(t))\ge v(x(t)/2)$ holds if and only if $x(t)\le x_1$. In view of \eqref{def_v1} and \eqref{def_beta}, we conclude that the solution of the optimization problem \eqref{opt_prob2} is exactly $b(t)$. Hence
\[
 \max\{u(q,2^{2m})\;|\;q>0,m\in\ZZ_{\ge 0},\, q\cdot 2^m\le t\} = b(t)\, t^{2/3}.
\]
The claimed asymptotics of $\ln B^*(t)$ follows. Finally, note that the values $m^*=\lfloor\mu\rfloor$ or $\lceil\mu\rceil$ (chosen as explained above) and $q^*=\lfloor x(t)t^{2/3}\rfloor$ or $\lfloor \frac{x(t)}{2}t^{2/3}\rfloor$ (respectively) satisfy the required conditions.
\end{proof}

To state Theorem~\ref{thm:asymgradings_CD} we need two more function, $b_\pm(t)$, which will play the role of $b(t)$ in Theorem~\ref{thm:asymgradings_Aii}. We begin with substitutes for the function $v(x)$ defined by \eqref{def_v}, which are
\begin{equation}\label{def_v_tau}
  v^\pm_\tau(x)=u(x,x^{-2}\pm \tau x^{-1}).
\end{equation} 
Here $x,\tau>0$; in the case of $v^-_\tau$ we also assume that $x^{-2}-\tau x^{-1}>0$. 

By the Implicit Function Theorem (IFT), for sufficiently small $\tau$ the transcendental equation 
\begin{equation}\label{def_x_tau}
 \frac{d}{dx} v^\pm_\tau(x)= 0
\end{equation} 
has a unique root near the root $x_0$ of the equation $v'(x)=0$.  
Denote that root $x^{\pm}_{0,\tau}$.
For sufficiently small $\tau$, the function $v^\pm(\tau)$ attains its maximum at $x^\pm_{0,\tau}$.

Similarly to \eqref{def_v1}, \eqref{def_lambda} and \eqref{def_beta}, we define
\begin{align}
  \tilde v^\pm_\tau(x)&=\max\{v^\pm_\tau(x),\,v^\pm_\tau(x/2)\},\label{def_v1_tau}\\
  \lambda^\pm_\tau(t)&=\phi\left(\frac{t^{1/3}}{x^\pm_{0,\tau}}\right),\label{def_lambda_tau}\\
  b_\pm(t)&=\left.\tilde v^\pm_\tau\left(x^\pm_{0,\tau} \cdot
\lambda^\pm_\tau(t)
\right)\right|_{\tau=(2t)^{-1/3}},\label{def_betapm}
\end{align} 
where $\phi(\cdot)$ is defined in \eqref{def_phi}. (Note that $b_\pm(t)$ are defined for sufficiently large $t$.)

The behaviour of the functions $b_\pm(t)$ with small $\tau$ is similar to that of $b(t)$. In particular, they are positive, bounded, and separated from zero (see Lemma~\ref{lem:prop_bpm}, below, for more precise information).   

\begin{theorem}\label{thm:asymgradings_CD}
There exists a constant $C>0$ such that
\begin{equation}\label{asymgradings_CD}
 - \logsquaredterm - C\ln n \le \ln \hat f_\pm(n) - 2^{-1/3}b_\pm (n)\,
n^{2/3} \le C\ln n,
\end{equation}
where the functions $b_\pm(\cdot)$ are defined in \eqref{def_betapm}.
\end{theorem}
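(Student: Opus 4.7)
The proof will follow the template of Theorem \ref{thm:asymgradings_Aii}, with three substantive differences specific to Series $C$ and $D$: the orbit set has size $M_\pm := 2^{m-1}(2^m \pm 1)$ (replacing $2^{2m}$), the group is $G_m = \SP_{2m}(2)$ rather than $\ASP_{2m}(2)$, and the scaling factor $2^{-1/3}$ appears in the leading asymptotic term. As in the proof for Series $A$, I first replace $\ln\hat f_\pm(n)$ by $\ln f^*(n)+O(\ln n)$ where $f^*(n) = \max_{2j \le n} f_\pm(2j)$, and then by $\ln N^* + O(\ln n)$ where $N^*$ is the largest of the $O(n \ln n)$ summands $N_\pm(m,q)$ in \eqref{eq:f_plus_minus}. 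The two-sided bounds \eqref{eq:triv_bounds_pm} and \eqref{eq:lower_bound_G_pm}, with $B_\pm(m,q) := \binom{q + M_\pm - 1}{q}$, then reduce the problem to estimating $\max \ln B_\pm(m,q)$ subject to $q \cdot 2^m \le n$.

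The core of the argument is an analog of Lemma \ref{lem:bincoef_constrained}. By Stirling, $\ln B_\pm(m,q) = u(q, M_\pm) + O(\ln n)$ whenever $q \cdot 2^m \le n$. The natural parametrization is $q = xN$ with $N = 2^{-1/3} n^{2/3}$; at the boundary of the constraint one computes $2^m = 2^{1/3} n^{1/3}/x$, and a direct calculation shows
\[
\frac{M_\pm}{N} = x^{-2} \pm (2n)^{-1/3} x^{-1},
\]
so by homogeneity of $u$ one obtains $u(q, M_\pm) = 2^{-1/3} n^{2/3} \cdot v^\pm_\tau(x)$ with $\tau = (2n)^{-1/3}$, matching precisely the definition preceding \eqref{def_betapm}. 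In the continuous (non-integer $m$) relaxation, the maximum over $x$ is attained at $x^\pm_{0,\tau}$, which exists and is unique near $x_0$ by the Implicit Function Theorem applied to $\partial v^\pm_\tau/\partial x = 0$ at $\tau = 0$. For integer $m \ge 0$, the admissible values of $x$ form a geometric progression of ratio $1/2$; the two adjacent values straddling $x^\pm_{0,\tau}$ give the true maximum, and by construction this maximum equals $\tilde v^\pm_\tau(x^\pm_{0,\tau} \lambda^\pm_\tau(n)) = b_\pm(n)$. This yields $\max u(q, M_\pm) = 2^{-1/3} b_\pm(n) n^{2/3} + O(\ln n)$, from which the upper inequality in \eqref{asymgradings_CD} follows immediately.

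For the lower inequality, the construction of the maximizer in the Series $C$/$D$ analog of Lemma \ref{lem:bincoef_constrained} produces explicit $m^*, q^*$ with $2^{m^*} \le C n^{1/3}$ and $\ln B_\pm(m^*, q^*) \ge 2^{-1/3} b_\pm(n) n^{2/3} - C \ln n$. The lower bound $N_\pm(m^*, q^*) \ge B_\pm(m^*, q^*)/|G_{m^*}|$ combined with $m^* \le \tfrac{1}{3} \log_2 n + O(1)$ gives $\ln N^* \ge 2^{-1/3} b_\pm(n) n^{2/3} - \ln|G_{m^*}| - C\ln n$. Since $|\SP_{2m}(2)| = 2^{m^2}\prod_{i=1}^m(2^{2i}-1) \le 2^{2m^2 + m}$, which is smaller than the $|\ASP_{2m}(2)|$ used in Theorem \ref{thm:asymgradings_Aii}, one still has $\ln|G_{m^*}| \le \logsquaredterm + O(\ln n)$, completing the lower bound.

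The main technical obstacle lies in the perturbation analysis underlying Lemma \ref{lem:prop_bpm}, which is required throughout the above. Specifically, one must show: existence and uniqueness of $x^\pm_{0,\tau}$ for all sufficiently small $\tau$ by IFT applied to the critical point equation for $v^\pm_\tau$; existence of a perturbed root of the doubling equation $v^\pm_\tau(x/2) = v^\pm_\tau(x)$ near $x_1$, ensuring that $\tilde v^\pm_\tau$ is continuous and bounded with well-defined extrema close to $b_0$ and $b_1$; and a first-order expansion $v^\pm_\tau(x) = v(x) \pm \tau \cdot (\partial_y u)(x, x^{-2}) \cdot x^{-1} + O(\tau^2)$, which together with $\tau = (2t)^{-1/3}$ yields $b_\pm(t) = b(t) + O(t^{-1/3})$ as advertised. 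Once these smoothness and perturbation facts are in hand, all the estimates of the proof of Theorem \ref{thm:asymgradings_Aii} carry over to the present setting with only cosmetic changes, the only new feature in the leading constant being the universal factor $2^{-1/3}$ traced back to $N = (n^2/2)^{1/3}$.
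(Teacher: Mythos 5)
Your proposal is correct and follows essentially the same route as the paper: reduce to the maximal summand, bound it two-sidedly via the binomial coefficient $B_\pm(m,q)$ and $|G_m|$, apply Stirling, and solve the resulting constrained optimization via the parametrization $q = 2^{-1/3}x\,t^{2/3}$, $2^m = (2t)^{1/3}/x$, which yields $u(q,M_\pm) = 2^{-1/3}t^{2/3}\,v^\pm_\tau(x)$ with $\tau=(2t)^{-1/3}$ — exactly the substitution used in Lemma~\ref{lem:bincoef_constrained2}. One small calibration: the paper places the IFT existence and monotonicity of $x^\pm_{0,\tau}$ in the preliminaries preceding \eqref{def_betapm} rather than relying on Lemma~\ref{lem:prop_bpm} itself, which is a follow-up result characterizing $b_\pm$; your characterization of those perturbation facts as ``required throughout'' slightly overstates the dependency, but the substance is right.
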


\begin{proof}
We follow the proof of Theorem~\ref{thm:asymgradings_Aii} with minor modifications, so 
we only describe the changes that need to be made. The cases of $f_+(n)$ and $f_-(n)$ are completely analogous;
their only difference is the sign $+$ or $-$ in various formulas. 

In the estimate \eqref{N_via_B}, the group is now $G_m=\SP_2(m)$ and $B(m,q)$ stands for the binomial coefficient $\binom{q+2^{m-1}(2^m\pm1)-1}{q}$. Since $|\SP_2(m)|<|\ASP_2(m)|$, the inequality \eqref{ln_order_G} is proved as before.  
It remains to apply the next lemma, which is an adaptation of Lemma~\ref{lem:bincoef_constrained}.
\end{proof}

\begin{lemma}\label{lem:bincoef_constrained2}
Let $B(m,q)=\binom{q+2^{m-1}(2^{m}\pm 1)-1}{q}$. For $t>1$, let
\begin{equation*}
  B^*(t)=\max\{B(m,q)\;|\;m,q\in\ZZ_{\ge 0}, 2^m q\le t\}.
\end{equation*}
Then $\ln B^*(t)=2^{-1/3} b_\pm(t)\, t^{2/3} +O(\ln t)$.
Moreover, there exist $q^*$ and $m^*$ such that $\ln B(m^*,q^*)=2^{-1/3} b_\pm(t)\,t^{2/3}+O(\ln t)$ and 
$q^*\le Ct^{2/3}$, $2^{m^*}\le Ct^{1/3}$.
\end{lemma}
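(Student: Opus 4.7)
The plan is to adapt the proof of Lemma~\ref{lem:bincoef_constrained}, with the essential novelty being a change of variables that absorbs the factor $\tfrac{1}{2}$ appearing in $M_\pm=2^{m-1}(2^m\pm 1)=\tfrac{1}{2}(2^{2m}\pm 2^m)$. Proceeding as before, we discard the degenerate case $q=0$, apply Stirling's formula to obtain
\[
\ln B(m,q)=u(q,M_\pm)+O(\ln t),
\]
and observe that, since $u$ is increasing in each argument, for fixed $m$ the optimal choice of $q$ is $\lfloor t/2^m\rfloor$, so one may work with $q=t/2^m$ at the cost of an additional $O(\ln t)$ error.

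Writing $q=x\,t^{2/3}$, so that $2^m=t^{1/3}/x$ at the boundary $q\cdot 2^m=t$, the homogeneity of $u$ yields
\[
u(q,M_\pm)=t^{2/3}\,u\!\left(x,\ \tfrac{1}{2}\bigl(x^{-2}\pm x^{-1}t^{-1/3}\bigr)\right).
\]
The key substitution is $\xi=2^{1/3}x$: a second application of homogeneity transforms the right-hand side into $2^{-1/3}\,t^{2/3}\,v_\tau^\pm(\xi)$ with $\tau=(2t)^{-1/3}$, and the relation $2^m=t^{1/3}/x$ rewrites as $\xi=(2t)^{1/3}/2^m$. This is the Series~$C$/$D$ analog of the identity $u(q,2^{2m})=t^{2/3}v(x)$ from the proof of Lemma~\ref{lem:bincoef_constrained}, with $v$ replaced by its perturbation $v_\tau^\pm$ and an overall scalar factor $2^{-1/3}$.

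From this point the argument runs parallel to the Series~$A$ case. For $t$ large (hence $\tau$ small), the Implicit Function Theorem applied to the critical-point equation of $v_\tau^\pm$ produces a unique interior maximizer $x_{0,\tau}^\pm$ close to $x_0$, and $v_\tau^\pm$ is unimodal in a neighbourhood large enough that the optimal $m$ is one of the two integers nearest $\log_2((2t)^{1/3}/x_{0,\tau}^\pm)$. Selecting the larger of the two corresponding values of $v_\tau^\pm$ corresponds precisely to the function $\tilde v_\tau^\pm$ in \eqref{def_v1_tau}, and matching with the definition \eqref{def_betapm} of $b_\pm(t)$ identifies the maximum as $2^{-1/3}b_\pm(t)\,t^{2/3}+O(\ln t)$; the bounds $q^*\le Ct^{2/3}$ and $2^{m^*}\le Ct^{1/3}$ follow because $x_{0,\tau}^\pm$ stays bounded. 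The principal new technical point, and the step where care is needed, is the uniform perturbative analysis of $v_\tau^\pm$ (unique maximizer, strict unimodality, smooth dependence on $\tau$); this is the content of Lemma~\ref{lem:prop_bpm}, which is invoked without proof.
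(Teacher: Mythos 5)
Your proof is correct and takes essentially the same approach as the paper: reduce $\ln B(m,q)$ to $u(q,M_\pm)$ by Stirling, replace $q$ by $t/2^m$ at $O(\ln t)$ cost, and change variables so that $u(q,M_\pm)=2^{-1/3}t^{2/3}v^\pm_\tau(\,\cdot\,)$ with $\tau=(2t)^{-1/3}$, after which the argument is identical to that of Lemma~\ref{lem:bincoef_constrained}. The only cosmetic difference is that the paper substitutes $q=2^{-1/3}xt^{2/3}$ directly, whereas you first write $q=xt^{2/3}$ and then rescale via $\xi=2^{1/3}x$; the two parametrizations coincide after your second step.
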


\begin{proof}
As in the proof of Lemma~\ref{lem:bincoef_constrained},
maximization of $B^*(t)$, to the accuracy of $O(\ln t)$, reduces to maximization of $u(q,M)$
under the constraints
\begin{equation}\label{opt_prob_CD}  
2M=2^m(2^{m}\pm 1), \quad q\cdot 2^m=t, \quad m\in\ZZ_{\ge 0};
\end{equation}
the condition $q\in\ZZ$ is dropped here.

Letting $q=2^{-1/3} x\cdot t^{2/3}$, we have $2^m=(2t)^{1/3}x^{-1}$,
\[
  M=2^{-1/3} x^{-2} t^{2/3} (1\pm(2t)^{-1/3} x),
\]
and
\[
  u(q,M)= \left. 2^{-1/3} t^{2/3}\, v^\pm_\tau(x)\right|_{\tau=(2t)^{-1/3}}.
\]
The rest of the proof repeats that of Lemma~\ref{lem:bincoef_constrained}. 
\end{proof}

The functions $b_\pm(t)$ used in Theorem~\ref{thm:asymgradings_CD} can be computed by \eqref{def_betapm} and
preceding formulas. However, understanding of their qualitative behaviour is obscured by the involvement of the parameter $\tau$.  
It is desirable to have simpler even if approximate expressions for $b_\pm(t)$ in terms of functions of just one variable. 
This is the purpose of Lemma~\ref{lem:prop_bpm}. Informally it says that for large $t$, on every interval between $t$ and $8t$
with deleted subinterval of size $O(t^{-1/3})$ the functions $b_\pm(t)$ have two-term asymptotics \eqref{asym_b_pm},
while if the exceptional subintervals are not deleted, then there is a uniform but less precise approximation \eqref{b1_bounds}.

If we replace $b_\pm(t)$ in Theorem~\ref{thm:asymgradings_CD} by their asymptotics \eqref{asym_b_pm}, the error terms
$O(t^{-2/3})$ give rise to the error of order $O(1)$ in the middle part of the inequality \eqref{asymgradings_CD},
which can be discarded at the expense of a possible increase of the constant $C$. 

The exceptional intervals are $O(t^{-1/3})$-neighbourhoods of values of $t$ corresponding to the switching point of maximum in 
\eqref{def_v1}. In those intervals, $b_\pm(t)=b_1+O(t^{-1/3})$ with $b_1$ as in \eqref{def_b1} and \eqref{beta_bounds}. For the corresponding
values of $n$ in \eqref{asymgradings_CD}, our simplified estimate \eqref{b1_bounds} yields a coarser asymptotics,
\[
 \hat f_\pm(n) = 2^{-1/3}b_1 n^{2/3} + O(n^{1/3}).
\]

\begin{lemma}\label{lem:prop_bpm}
(i) The functions $b_\pm(t)$ are continuous and bounded. Moreover, 
\begin{equation}\label{b1_bounds}
   b_\pm(t)= b(t)+O(t^{-1/3}),  
\end{equation}
where $b(t)$ is defined by \eqref{def_beta}.

(ii) Recall $x_0$ defined in Lemma \ref{lem:sol_opt_prob} and $x_1$ defined by \eqref{eq:doubling_in_v}. 
There exists $C>0$ such that for all sufficiently large $t$ satisfying
\begin{equation}\label{reg_interval}
\left|\phi\left(\frac{t^{1/3}}{x_0}\right)-\frac{x_1}{x_0}\right| > Ct^{-1/3}
\end{equation}
we have
\begin{equation}\label{asym_b_pm}
 b_\pm(t)= b(t)\pm t^{-1/3} b^{(1)}(t) +O(t^{-2/3}),
\end{equation}
where $b^{(1)}(t)$ is a bounded function given by the formula of a type similar to \eqref{def_beta}:
\begin{equation}\label{expr_b1}
  b^{(1)}(t)=\frac{\ln(1+x^3(t))}{x(t)}, 
\qquad
  x(t)=\left\{\begin{array}{ll} x_0 \lambda(t)\; & \mbox{\rm if $x_0 \lambda(t)<x_1$},
  \\   (x_0/2) \lambda(t)\; & \mbox{\rm  if $x_0 \lambda(t)>x_1$}.
 \end{array}\right. 
\end{equation}
In particular, 
$b^{(1)}(8t)=b^{(1)}(t)$. 
\end{lemma}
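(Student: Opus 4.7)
My approach is a first-order perturbation of the single-variable optimization in Lemma~\ref{lem:bincoef_constrained} by the small parameter $\tau = (2t)^{-1/3}$. First, I apply the Implicit Function Theorem to $(v^\pm_\tau)'(x) = 0$ at $(\tau,x)=(0,x_0)$ (the Hessian of $v$ at its maximum is nondegenerate) to obtain $x^\pm_{0,\tau} = x_0 + O(\tau)$ as a smooth function of $\tau$. Next, the elementary identity $\partial_y u(x,y) = \ln(1+x/y)$ gives $\partial_y u(x,x^{-2}) = \ln(1+x^3)$, which yields the Taylor expansion
\[
v^\pm_\tau(x) = u(x, x^{-2} \pm \tau x^{-1}) = v(x) \pm \tau\,\frac{\ln(1+x^3)}{x} + O(\tau^2),
\]
uniformly on compact intervals of $x$ bounded away from $0$ and $\infty$. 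This already reveals the coefficient $\ln(1+x^3)/x$ appearing in \eqref{expr_b1}.

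For part~(i), the essential identity is
\[
x^\pm_{0,\tau}\,\lambda^\pm_\tau(t) \;=\; t^{1/3}\cdot 2^{-\lfloor \log_2(t^{1/3}/x^\pm_{0,\tau})\rfloor},
\]
and likewise $x_0\lambda(t) = t^{1/3}\cdot 2^{-\lfloor \log_2(t^{1/3}/x_0)\rfloor}$. When the two floors coincide, the arguments are \textit{exactly} equal, and $|b_\pm(t)-b(t)| = O(\tau) = O(t^{-1/3})$ reduces to the uniform Taylor bound on $v^\pm_\tau - v$. When the floors differ (necessarily by $1$, since $x^\pm_{0,\tau}-x_0 = O(\tau)$), the two arguments differ by a factor of $2$, but they both sit within $O(t^{-1/3})$ of either $x_0$ or $2x_0$, which are (locally quadratic) maxima of $\tilde v$, contributing only $O(t^{-2/3})$ to the discrepancy. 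Continuity of $b_\pm(t)$ follows because the jumps of $\lambda^\pm_\tau(t)$ move the argument between $\approx x^\pm_{0,\tau}$ and $\approx 2x^\pm_{0,\tau}$, at which the defining maximum $\tilde v^\pm_\tau$ takes the common value $v^\pm_\tau(x^\pm_{0,\tau})$; boundedness is clear from $\lambda^\pm_\tau \in [1,2)$.

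For part~(ii), the regularity condition \eqref{reg_interval} translates into $|x_0\lambda(t) - x_1| \ge C x_0\,t^{-1/3}$, and since $v(x) - v(x/2)$ has nonzero derivative at $x=x_1$, this gap is wide enough (for $\tau$ sufficiently small) to guarantee that $\tilde v^\pm_\tau$ attains its maximum on the \textit{same} branch as $\tilde v$---either $v^\pm_\tau(y)$ when $y = x_0\lambda(t) < x_1$ or $v^\pm_\tau(y/2)$ when $y > x_1$. In the generic subcase (floors agree), the Taylor expansion applied to that single branch directly yields
\[
b_\pm(t) - b(t) = \pm\tau\,\frac{\ln(1+x(t)^3)}{x(t)} + O(\tau^2),
\]
with $x(t)$ as in \eqref{expr_b1}. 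In the near-jump subcase, I use the identity $\tilde v^\pm_\tau(2y) = v^\pm_\tau(y) = \tilde v^\pm_\tau(y)$ (valid for $y$ near $x^\pm_{0,\tau}$) to pass from an argument of the form $2x_0\lambda(t)$ to $x_0\lambda(t)$; the formula \eqref{expr_b1} is tailored precisely so that $x(t)$ is continuous across the $\phi$-jumps, yielding the same expansion. Substituting $\tau = (2t)^{-1/3}$ produces \eqref{asym_b_pm}, and $b^{(1)}(8t) = b^{(1)}(t)$ is immediate from $\phi(2s)=\phi(s)$, which forces $\lambda(8t)=\lambda(t)$ and hence $x(8t)=x(t)$.

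The principal obstacle is reconciling the smooth Taylor expansion in $\tau$ with the two discrete features hidden in $b_\pm(t)$: the jumps of $\phi$ at powers of $2$ and the switching of the maximum in $\tilde v$ at $x_1$. The regularity hypothesis \eqref{reg_interval} isolates us from the latter, while the former is absorbed by the self-similarity $\tilde v(2x) = v(x) = \tilde v(x)$ valid near $x_0$, which is built into the definition \eqref{def_v1} of $\tilde v$.
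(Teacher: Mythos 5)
Your proposal is correct and follows essentially the same route as the paper: IFT to produce $x^{\pm}_{0,\tau}=x_0+O(\tau)$ and $x_{1,\tau}=x_1+O(\tau)$, the first-order Taylor expansion $v^\pm_\tau(x)=v(x)\pm\tau\,x^{-1}\ln(1+x^3)+O(\tau^2)$ (via $\partial_y u(x,x^{-2})=\ln(1+x^3)$), and a case analysis that uses the regularity condition \eqref{reg_interval} to lock the branch of $\tilde v^\pm_\tau$ to that of $\tilde v$. The paper packages the branch bookkeeping through the two-parameter functions $\beta_k(t,\tau)$ and an explicit triangle-inequality argument for the branch-agreement claim, whereas you stratify directly by whether $\lfloor\log_2(t^{1/3}/x_{0,\tau})\rfloor$ agrees with $\lfloor\log_2(t^{1/3}/x_0)\rfloor$ and exploit the continuity of $x(t)$ across the $\phi$-jumps; these are notational variants of the same underlying argument.
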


\begin{proof}
Let us analyse the function $b_+(\cdot)$, say. To lighten notation,  we will write $v_\tau$ instead of $v^+_\tau$, etc.,
when referring to the functions \eqref{def_v_tau}--\eqref{def_lambda_tau}. Whenever the behaviour of a function with subscript $\tau$ with respect to $t$ is discussed, it will be assumed that $\tau=(2t)^{-1/3}$ unless stated otherwise.  

\smallskip
(i) First, note the asymptotics of the critical point $x_{0,\tau}$ of the function $v_\tau(x)$:
$x_{0,\tau}=x_0+O(\tau)$ (by IFT).

The argument of the function $\tilde v_\tau$ in \eqref{def_betapm} lies in 
%
$[x_{0,\tau}, 2x_{0,\tau}]\subset I_0\bydef [x_0/4, 4x_0]$ for sufficiently large $t$.
Regarding $v_\tau(x)$ as a small perturbation of the function $v(x)=u(x,x^{-2})$, we have the approximation
\[
  v_\tau(x)=v(x)+ O(\tau)\quad 
\mbox{\rm as $\tau\to 0$} 
\]
with uniform remainder term for $x\in I_0$. The same approximation holds true with $x$ replaced by $x/2$
and consequently for $\tilde v_\tau(x)$ and $\tilde v(x)$ instead of, respectively, $v_\tau(t)$ and $v(x)$,
cf.\ \eqref{def_v1} and \eqref{def_v1_tau}.
Thus we obtain the asymptotics \eqref{b1_bounds}. 

The function $b_+(t)$ is certainly continuous at all points of continuity of $\lambda_\tau(t)$. In order to check its continuity
for all 
values of $t$, it remains to consider the points where $\lambda_\tau(t)$ has jump discontinuities.
If $t=\theta$ is such a point, then there are two limit values of $x_{0,\tau} \lambda_\tau(t)$ as $t\to\theta$, namely, $x_{0,\tau}$ and $2x_{0,\tau}$. Hence, the limit values of $b_+(t)$ are $\tilde v_\tau(x_{0,\tau})$ and $\tilde v_\tau(2x_{0,\tau})$. But these are equal by definition \eqref{def_v1_tau} of $\tilde v_\tau$.     

\smallskip
(ii) 
We introduce two auxiliary functions of the variables $t$ and $\tau$, temporarily considered as independent:
\begin{equation}\label{beta_k}
  \beta_k(t,\tau)=v_\tau\left(\frac{x_{0,\tau}}{k}\cdot \phi\left(\frac{t^{1/3}}{x_{0,\tau}}\right)\right), \quad k\in\{1,2\}.
\end{equation}
Then 
\[
b_+(t)=\max_{k}\beta_k(t, (2t)^{-1/3}).
\]
Let us analyse more carefully which of the two values of $k$ provides the maximum and where the switch between $k=1$ and $k=2$ occurs.

By IFT, for sufficiently small values of $\tau$ the equation $v_\tau(x/2)=v_\tau(x)$ has a unique root $x_{1,\tau}$ 
in the interval $[x_0, 2x_0]$, and $x_{1,\tau}=x_1+O(\tau)$.
(We omit a verification of applicability of IFT, which amounts to numerical evaluation with guaranteed maximum error.)   

In the interval $x\in [x_{0,\tau}/2, 2x_{0,\tau}]$ the function $v_\tau(x)$ has maximum at $x_{0,\tau}=x_0+O(\tau)$; 
it is increasing for $x<x_{0,\tau}$ and decreasing for $x>x_{0,\tau}$.  
Hence, to obtain $b_+(t)$, we must substitute $\tau=(2t)^{-1/3}$ and take $k=1$ if $\phi(t^{1/3}/x_{0,\tau})\le x_{1,\tau}/x_{0,\tau}$ and $k=2$ otherwise.

\smallskip
We claim that the condition 
\begin{equation*}
   (-1)^k \left(\phi\left(\frac{t^{1/3}}{x_0}\right)-\frac{x_1}{x_0}\right) > C\tau 
\end{equation*}
with sufficiently large $C$ implies 
\begin{equation*}
   (-1)^k \left(\phi\left(\frac{t^{1/3}}{x_{0,\tau}}\right)-\frac{x_{1,\tau}}{x_{0,\tau}}\right) > 0.
\end{equation*}
It is clearly so if $\phi(x_0^{-1}t^{1/3})$ is close to the extreme values 1 or 2, so assume that $\phi(x_0^{-1}t^{1/3})\in (1+\eps,2-\eps)$ with 
some $\eps>0$. To be specific, suppose $(-1)^k=1$ and write the implication to prove in the abridged form 
$(\phi_0-\xi_0>C\tau) \,\stackrel{?}{\Rightarrow}\, (\phi_\tau-\xi_\tau>0)$. 
By a standard argument with triangle inequality, it suffices to show that $|\phi_0-\phi_\tau|<C\tau/2$ and $|\xi_0-\xi_\tau|<C\tau/2$. 
The latter inequality with appropriate $C$ is equivalent to the obvious estimate $(x_{1,\tau}/x_{0,\tau}) - (x_1/x_0)=O(\tau)$.
As for the former one, note that $\phi(\cdot)$ is linear homogeneous on its intervals of continuity, so
\[
\frac{\phi_0}{\phi_\tau}=\frac{x_{0,\tau}}{x_0} =1+O(\tau).  
\]
The estimate $\phi_0-\phi_\tau=O(\tau)$ follows.

\smallskip
The rest is simple: once $k$ is determined by the value of $\phi(x_0^{-1}t^{1/3})$, we write
\[
  \beta_k(t,\tau)=\beta_k(t,0) +\left.\frac{\partial\beta_k}{\partial\tau}\right|_{\tau=0}\,\tau +O(\tau^2).
\]
Here $\beta_k(t,0)=b(t)$ and the second term has the form $b^{(1)}(t)\tau$. Since $\partial_\tau\beta(t,0)$ depends on $t$
only through the function $\phi(x_0^{-1}t^{1/3})$ in the argument of $v'(\cdot)$, it is clear that $b^{(1)}(t)$ is
bounded and $b^{(1)}(t)=b^{(1)}(8t)$. 

It remains to complete calculation of $b^{(1)}(t)$. It will follow that the result in the case of $b_-(t)$ differs from
that in the case of $b_+(t)$ just in sign. 
 
Observe that the argument of the function $v_\tau$ in \eqref{beta_k}
is locally constant, hence $(\partial/\partial\tau)\beta_k(t,\tau)=(\partial v_\tau/\partial\tau)(\dots)$.
Looking at the definition \eqref{def_v_tau} of the functions $v^{\pm}_\tau$ and recalling the definition of $u$ in
\eqref{opt_prob1}, we find 
\[
 \left.\frac{\partial v^{\pm}_\tau}{\partial\tau}\right|_{\tau=0}= \pm x^{-1}\left.\frac{\partial u(x,y)}{\partial y}\right|_{y=x^{-2}}
= \pm x^{-1} \ln(x^3+1).
\]
Taking $k=1$ or $2$ according to the sign of $x_{0,\tau}\phi(t^{1/3}/x_{0,\tau})-x_1$, we come to the formula \eqref{expr_b1}.
\end{proof}

\section*{Acknowledgments}

The authors are grateful to John Irving, Jonny Lomond, Attila Mar\'oti, Csaba Schneider and Nabil Shalaby for useful discussions. Our special thanks are due to Leandro Vendramin for help with GAP.


\end{document}